\newtheorem{theorem}{Theorem}[section] % 1st argument is your name for it
\newtheorem{lemma}[theorem]{Lemma}     % 2nd argument is what is printed
\newtheorem{proposition}[theorem]{Proposition}
\theoremstyle{definition}
\newtheorem{example}[theorem]{Example}
\newtheorem{remark}[theorem]{Remark}
\title[On genera containing split Eichler orders]
\author{Luis Arenas-Carmona}
\author{Claudio Bravo}
\newcommand\ad{\mathbb{A}}
\newcommand\alge{\mathfrak{A}}
\newcommand\oink{\mathcal O}
\newcommand\enteri{\mathbb Z}
\newcommand\Da{\mathfrak{D}}
\newcommand\Ra{\mathfrak{R}}
\newcommand\Ha{\mathfrak{H}}
\newcommand\Ea{\mathfrak{E}}
\newcommand\Fa{\mathfrak{F}}
\newcommand\bmattrix[4]{\left(\begin{array}{cc}#1&#2\\#3&#4\end{array}\right)}
\newcommand\sbmattrix[4]{\textnormal{\scriptsize$\left(\begin{array}{cc}#1&#2\\#3&#4\end{array}\right)$\normalsize}}
\newcommand\matrici{\mathbb{M}}
\newcommand\finitum{\mathbb{F}}
\begin{document}
\maketitle

\footnotesize
\textbf{MSC Numbers (2010): 14H60-11R58 (primary), 14G15-20E08 (secondary)}

\textbf{Keywords: Global function fields, eichler orders, quotient graphs,  vector bundles}
\normalsize

\begin{abstract}
Grothendieck-Birkhoff Theorem states that every finite dimensional vector bundle over the projective line
$\mathbb{P}^1$ splits as the 
sum of one dimensional vector bundles. This can be rephrased, in terms of orders, as stating that all
maximal $\mathbb{P}^1$-orders in a matrix algebra split. 
 In this work we study the extent to which this result can be generalized to Eichler
$\mathbb{P}^1$-orders when the base field $\mathbb{F}$ is finite. 
To be precise, we  caracterize both the genera of Eichler orders containing only split orders
and the genera containing only a finite number of non-split conjugacy clases. The latter
characterization is given for arbitrary projective curves over $\mathbb{F}$.
The method developped here also allows us to compute quotient graphs for some
subgroups of $\mathrm{PGL}_2(\mathbb{F}[t])$ of arithmetical interest.
\end{abstract}

\section{Introduction}\label{intro}

Split orders in the $4$-dimensional matrix algebra $\mathbb{M}_2(k)$,  where $k$ is a local
field, were characterized by Hijikata in \cite{Hijikata}. By definition, an order in $\mathbb{M}_2(k)$
 is split if it contains an isomorphic copy of the ring $\oink_k\times\oink_k$, where $\oink_k$ is the ring 
of integers in $k$, or equivalently, if it has the form
$$\bmattrix{\oink_k}IJ{\oink_k},$$
where $I$ and $J$ are fractional ideals. Hijikata proved
 these to be either maximal orders or intersections of
two maximal orders. These are local properties, and in fact, 
for any global field $K$, and for any ring $\oink_S\subseteq K$ of 
$S$-integers, i.e., elements that are integral outside a nonempty finite set $S$ of places that includes the
archimedean places if any, global split $\oink_S$-orders in  $\mathbb{M}_2(K)$ share the same 
characterization.

 When $K$ is a global function field, i.e., the field of rational functions on a smooth irreducible
projective curve $X$ over a finite field $\finitum$, we define $X$-orders in 
$\mathbb{M}_2(K)$ as sheaves of rings
whose generic fiber is $\mathbb{M}_2(K)$ \cite{rrtflpo}. This is usually regarded as the case $S=\emptyset$
in the theory of orders, and this point of view has been fruitful in the past to study quotients
of Bruhat-Tits trees by groups of arithmetical interest (c.f. \cite{cqqgvro}). 
The preceding characterization fails in this setting, as one would expect, giving the absence 
of a Strong Aproximation Theorem with respect to the empty set. However, we do have a result
in this direction, although a significantly more specific one. This is essentially Grothendieck-Birkhoff Theorem
\cite[Thm. 2.1]{burban}, which implies, as we see below, the following statement: 
\begin{quote}
 \textbf{Theorem GB:}
\emph{Every maximal $X$-order  in $\mathbb{M}_2(K)$ is split
when $X$ is the projective line $\mathbb{P}^1$.}
\end{quote}

There is a also a finiteness result that can be regarded as a partial generalization of the preceding statement 
to an arbitrary smooth projective curve defined over a finite field. It follows from the description of the
classifying graph in \cite{cqqgvro} (c.f. \S3):  
\begin{quote} {\bf Finiteness Theorem:} \emph{If $X$ is an arbitrary smooth projective curve over a finite
field, all but finitely many isomorphism classes of maximal $X$-orders  in $\mathbb{M}_2(K)$
contain only split orders.} \end{quote}
The purpose of the present work is to study the
extent to which these results extend to Eichler orders, i.e., intersections of two maximal orders.
The theory introduced here to prove these results can be used to compute quotient graphs of arithmetical interest,
as we exemplify in the last section of this paper.

We start by recalling some basic facts on bundles and lattices.
Let $\oink_X$ denote the structure sheaf of the curve $X$. We can assume that
$\finitum$ equals the full constant field $\oink_X(X)$ of $K$, as we do in the sequel.
An $X$-lattice 
$\Lambda$ is a locally free sheaf of  $\oink_X$-modules of finite rank $n$.
 The group of global sections $\Lambda(X)$ is a finite dimensional vector space over $\finitum$ for any 
$X$-lattice $\Lambda$. 
The sheaf of sections of a vector bundle is an $X$-lattice, and as usual we identify the bundle with the
corresponding  lattice. 
The generic fiber $\Lambda\otimes_{\oink_X}K$ is isomorphic to $K^n$ as a vector space over $K$,
 and we fix one such isomorphism by saying that $\Lambda$ is a lattice in $K^n$. Equivalently, we choose
a $K$-linearly independent set of $n$  sections over some afine subset $U_0\subset X$ and identify it with the canonical basis
of $K^n$. 
This implies that the group of $U$-sections $\Lambda(U)$ is identified with a subset of $K^n$ for any
open set $U\subseteq X$. 
  Thus defined, two lattices $\Lambda$ and $\Lambda'$, or their corresponding bundles,
are isomorphic if and only if there exists an invertible n-by-n matrix $T\in\mathrm{GL}_n(K)$
 satisfying $T\Lambda=\Lambda'$.  
 Similar conventions applies to other explicit vector spaces.
 Note that $\Lambda(U)$ is a lattice over the Dedekind domain $\oink_X(U)$
as defined in \cite{Om}.
An order $\Ra$ in a $K$-algebra $\alge$ is an $X$-lattice in $\alge$ such that $\Ra(U)$ is a ring for any
open subset $U$, e.g., the structure sheaf $\oink_X$ is an $X$-order in $K$. 
We let $\Ra$, $\Da$ and $\Ea$ denote 
$X$-orders of maximal rank in $\mathbb{M}_2(K)$ in all that follows.

 Recall that every $X$-bundle in the one dimensional space $K$  has the form 
$$\mathfrak{L}^B(U)=\left\{f\in K\Big|\mathrm{div}(f)|_U+B|_U\geq0\right\},$$
for some fixed divisor $B$ on $X$, and for every open set $U\subseteq X$.
These bundles are usually called invertible bundles in current literature, and they
have the following properties:
\begin{enumerate}
\item  Linearly equivalent divisors define isomorphic bundles,
\item $\mathfrak{L}^B\mathfrak{L}^D= \mathfrak{L}^{B+D}$, for every pair of divisors $(B,D)$,
\item $\mathfrak{L}^B(U)\subseteq\mathfrak{L}^D(U)$ for every open set $U$ if and only if
$B\leq D$ and
\item $\mathfrak{L}^{\mathrm{div}(g)}=g^{-1}\oink_X$.
\end{enumerate}
In (2), $\mathfrak{L}^B\mathfrak{L}^D$ denotes the sheaf defined by $(\mathfrak{L}^B\mathfrak{L}^D)(U)=
\mathfrak{L}^B(U)\mathfrak{L}^D(U)$ on open sets $U\subseteq X$, which is isomorphic to the
tensor product $\mathfrak{L}^B\otimes_{\oink_X}\mathfrak{L}^D$. In higher dimensions, similar conventions apply
 to scalar products or other bilinear maps. 

A split $X$-lattice or split $X$-bundle is a lattice isomorphic to a direct sum of invertible bundles, 
e.g., a two dimensional $X$-lattice $\Lambda$ is split if $\Lambda\cong\mathfrak{L}_1\times\mathfrak{L}_2$,
as $\oink_X$-modules, where $\mathfrak{L}_1$ and $\mathfrak{L}_2$ are  invertible bundles.
We say that a basis $\{e_1,e_2\}$ splits or diagonalizes an $X$-bundle $\Lambda$ in $K^2$ if 
$\Lambda=\mathfrak{L}_1e_1\oplus\mathfrak{L}_2e_2$, where $\mathfrak{L}_1$ and $\mathfrak{L}_2$
are invertible bundles. Certainly a bundle in $K^2$ is split if and only if  it is split by at least one basis.

 To each $X$-bundle $\Lambda$ in $K^2$ we associate
the order $\Da_{\Lambda}=\mathcal{E}\mathit{nd}_{\oink_X}(\Lambda)$
 in the matrix algebra $\matrici_2(K)$, which can be defined by 
$$\Da_\Lambda(U)=\left\{a\in \matrici_2(K)\Big|a\Lambda(U)\subseteq\Lambda(U)\right\},$$
for every open set $U\subseteq X$. This is a maximal order in $\mathbb{M}_2(K)$ and every maximal order
of this algebra has the form $\Da_\Lambda$ for some $X$-bundle $\Lambda$ in $K^2$.  
 The $X$-bundle $\Lambda$ is split by a certain basis $\{e_1,e_2\}$
if and only if the corresponding maximal order has the form 
$\Da_\Lambda=\sbmattrix {\oink_X}{\mathfrak{L}^{-D}}{\mathfrak{L}^{D}}{\oink_X}$, for
some divisor $D$, in that basis. In fact, if $\Lambda=\mathfrak{L}^Be_1\oplus\mathfrak{L}^Ce_2$,
we have $D=C-B$. This condition on $\Da_\Lambda$
 is equivalent to $\sbmattrix 1000, \sbmattrix 0001\in\Da_\Lambda(X)$.
More generally, we say that an order $\Ea$ is split if 
$\Ea$ is conjugate to $\sbmattrix {\oink_X}{\mathfrak{L}_1}{\mathfrak{L}_2}{\oink_X}$ for some pair of
invertible bundles $(\mathfrak{L}_1,\mathfrak{L}_2)$, or equivalently,
 if its ring of global sections contain a non-trivial idempotent. A split order is split as a lattice but
the converse is false in general.

\begin{quote}
 \textbf{Grothendieck-Birkhoff Theorem \cite[Thm. 2.1]{burban}:}
 Every bundle over $X=\mathbb{P}^1$ is a product
of one dimensional bundles.
\end{quote}

 It is well known that two vector bundles $\Lambda$ and $M$ in $K^2$
satisfy $\Da_\Lambda=\Da_M$ if and only if there exists an invertible  vector bundle 
$\mathfrak{L}$ such that $\Lambda=\mathfrak{L}M$, where the product on the right is 
the scalar product in the vector space $K^2$.
It is apparent that every basis splitting $M$ splits also $\mathfrak{L}M$, so the splittings
of a certain bundle can be more naturally studied in terms of the corresponding maximal order.
In this context, Theorem GB is a particular case of Grothendieck-Birkhoff Theorem.

For every pair of maximal orders  $\Da_\Lambda$ and $\Da_{\Lambda'}$, we consider the Eichler order
$\Ea_{\Lambda,\Lambda'}=\Da_\Lambda\cap\Da_{\Lambda'}$. This is an order of maximal rank in $\matrici_2(K)$.
It follows easily from Hijikata's local characterization that split orders are Eichler, as being Eichler
is a local property, but the converse
is not always true. It follows from the results in this work that non-split Eichler orders exists for every curve 
$X$. This is hardly surprising for geometry experts, as splitting bundles are a thin subset of the moduli
space for curves of higher genus.

\begin{example}
A consequence of Hijikata's characterization of local split orders is the following:
For every pair of lattices $\Lambda$ and $\Lambda'$ in $k^2$, there exists a basis $\{e_1,e_2\}$
 for which $\Lambda=I_1e_1\oplus I_2e_2$ and $\Lambda'=J_1e_1\oplus J_2e_2$, for suitable
ideals $I_1,I_2,J_1,J_2\subseteq\oink_k$. In other words, there is a basis splitting both lattices 
simultaneously. This also holds for arbitrary Dedekind domains, and it is the fundation
of the theory of invariant factors for lattices (c.f. \cite{Om}). Similarly, in the present context,
characterizing split Eichler orders solves the problem of determining whether there is a common
basis splitting two given lattices in $K^2$, or equivalently, whether a common change of variables 
can take a pair of vector bundles into a split form simultaneously.
\end{example}

As we recall in \S2 bellow, an order of maximal rank in $\matrici_2(K)$, or more generally a lattice $\Lambda$
 in a vector space $V$, is completely determined  by its set of completions 
$\left\{\Lambda_P\subseteq V_P\Big| P\in|X|\right\}$ (c.f. \S2), where $|X|$ denote the set of closed points of $X$.
Such orders are usually classfied into genera. A genus is a maximal set of locally isomorphic  orders.
Equivalently, two orders are in the same genus if their completions at all local places are conjugate.
Class Field Theory has been used for a time to classify orders in a genus. This theory allows us to
split a genus into spinor genera. A spinor genus, in a given genus, is a maximal subset
whose lattices are isomorphic over
all but a finite number of affine subsets of $X$. We recall part of this theory in \S2, where
a more technical, but equivalent, definition of spinor genus is given. For a full account,
we refer the reader to \cite{abelianos}. Orders in a spinor genus are classified via quotient graphs.
We recall this theory in \S3, but we refer the reader to \cite{cqqgvro} for a full account on this subject.

A full description of the relation between the spinor genus of an Eichler order and those of the maximal
orders containing it is given in \cite[\S6]{scffgeo}. We just need to recall, for our purposes, that
the genus of an Eichler order $\Ea$ is determined by its level. At a local place $P$,
 the level is the natural distance, in the Bruhat-Tits tree (c.f. \S3), 
between the unique pair of maximal orders whose intersection is the completion $\Ea_P$ (c.f. \S2).
In the global context, the level of an Eichler order
 $\Ea_{\Lambda,\Lambda'}$ is an efective divisor $D=D(\Da_\Lambda,\Da_{\Lambda'})$
defined in terms of these local distances (c.f. \S2). It can also be characterized by the following
property:
\begin{quote}
For every affine open set $U\subseteq X$, we have an isomorphism of $\oink_X(U)$-modules
$$\Da_\Lambda(U)/\Ea_{\Lambda,\Lambda'}(U)\cong
\Da_{\Lambda'}(U)/\Ea_{\Lambda,\Lambda'}(U) \cong\oink_X(U)/\mathfrak{L}^{-D}(U).$$
\end{quote}
In terms of this distance, our main results are as follows:

\begin{theorem}\label{t1}
For an arbitrary smooth projective curve $X$ over a finite field, and for any effective divisor $D$, 
there is only a finite number of conjugacy classes of non-split Eichler orders of level $D$
 if and only if $D$ is multiplicity free, i.e., $D=\sum_{i=1}^nP_i$, where $P_1,\dots,P_n$
are different closed points.
\end{theorem}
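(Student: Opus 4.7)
My strategy rests on the correspondence between Eichler orders $\Ea$ of level $D$ and unordered pairs $(\Da_\Lambda,\Da_{\Lambda'})$ of maximal $X$-orders whose local completions at each closed point $P\in|X|$ sit at distance $\mathrm{ord}_P(D)$ in the local Bruhat--Tits tree $\mathfrak{T}_P$ (as recalled in \S2). The two implications are treated separately.

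\emph{Sufficiency: multiplicity-free implies finite.} Write $D=P_1+\cdots+P_n$. Then $\Da_{\Lambda',P_i}$ is a nearest neighbor of $\Da_{\Lambda,P_i}$ in $\mathfrak{T}_{P_i}$, giving at most $\prod_i(|k(P_i)|+1)$ local choices once $\Da_\Lambda$ is fixed, and the two completions coincide away from the support of $D$. The core technical step is a splitting dichotomy at each local neighbor: given a splitting $\Lambda=\Fa_1\oplus\Fa_2$, a neighbor $\Lambda'_{P_i}$ coming from a coordinate line in the residue $\Lambda_{P_i}/\pi_{P_i}\Lambda_{P_i}$ yields a globally split $\Lambda'$ in the same basis (a twist of $\Fa_1\oplus\Fa_2$ by $\pm P_i$); for non-coordinate choices, one shows that only finitely many stabilizer-orbits produce a non-split Eichler order, because the obstruction class lives in a finite-dimensional cohomology group such as $H^1(X,\Fa_1\Fa_2^{-1}\otimes\oink_X(-P_i))$. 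Combined with the Finiteness Theorem, which bounds the non-split maximal orders, this yields finitely many non-split Eichler orders of level $D$.

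\emph{Necessity: non-multiplicity-free implies infinite.} Suppose $m:=\mathrm{ord}_P(D)\geq 2$ at some place $P$. I would construct an explicit infinite family $\{\Ea^{(r)}\}_{r\geq 0}$ of pairwise non-conjugate non-split Eichler orders. For each $r$, take $\Lambda^{(r)}=\oink_X\oplus\oink_X(rQ)$ for a fixed auxiliary closed point $Q\neq P$, so that $\Da_{\Lambda^{(r)}}$ is split with a widening degree gap $r\deg(Q)$. Construct $\Lambda'^{(r)}$ to agree with $\Lambda^{(r)}$ outside the support of $D$, match the prescribed distances at each $P_j\neq P$, and sit at distance $m$ from $\Lambda^{(r)}$ at $P$ via an off-diagonal elementary chain, locally of the form $\pi_P^m\Lambda^{(r)}_P+\oink_{X,P}\cdot(e_1+\pi_P^{-m+1}e_2)$ in the coordinate basis. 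For $m=1$ such a configuration is always absorbable by a change of basis, but for $m\geq 2$ the extension class survives. One then argues (a) $\Ea^{(r)}$ is non-split because a common global splitting would force the off-diagonal twist to trivialize, and (b) the $\Ea^{(r)}$ are pairwise non-conjugate because the degree gap $r\deg(Q)$ is an invariant of the associated containing maximal bundle.

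\emph{Main obstacle.} The hardest point is the uniform finiteness in the sufficiency direction: one must control, over the infinitely many conjugacy classes of split maximal orders produced by the Grothendieck--Birkhoff philosophy, how many ``bad'' neighbor configurations produce genuinely non-split Eichler orders. Making this precise requires a detailed understanding of the cuspidal structure of the classifying graph in the genus of Eichler orders of level $D$, and ultimately rests on the feature distinguishing a single elementary modification from iterated modifications: only the latter admit infinitely many inequivalent global trivializations, which mirrors the dichotomy in the theorem.
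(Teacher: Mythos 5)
Your overall architecture (cusp analysis of classifying graphs for the multiplicity-free direction, an explicit infinite family for the converse) matches the paper's, but both halves have genuine gaps at exactly the points where the real work lies. In the sufficiency direction, your ``Main obstacle'' paragraph is an accurate diagnosis of the problem rather than a solution to it: since there are infinitely many conjugacy classes of split maximal orders, bounding the local choices ``once $\Da_\Lambda$ is fixed'' does not bound the number of non-split classes, and the cohomological obstruction you invoke is never tied to a concrete finiteness statement. The paper closes this gap by induction on the number of points in the support of $D$: writing all but finitely many classes of level $P_1+\cdots+P_t$ as $\Ea[B,B']$ with $\deg(B)$ bounded and $\deg(B')$ large, Riemann--Roch forces $\dim_{\finitum}\big(\mathfrak{L}^{-B}(X)/\mathfrak{L}^{-B-P_{t+1}}(X)\big)=\deg(P_{t+1})$, so the unit group $\Ea[B,B'](X)^*$ acts with exactly \emph{two} orbits on the $P_{t+1}$-neighbors, and the two resulting classes are the split orders $\Ea[B+P_{t+1},B']$ and $\Ea[B,B'+P_{t+1}]$. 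That inductive step, which is the substance of the proof, is missing from your plan.

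In the necessity direction the construction you describe is likely to produce \emph{split} orders. A straight off-diagonal descent of length $2$ from the split endpoint $\Lambda^{(r)}$ is, for large $r$, conjugate under $\Da_{\Lambda^{(r)}}(X)^*$ to the coordinate descent (the unit group acts transitively on the non-distinguished residue lines, and $2$-transitively on the relevant configurations, as in the paper's Example 4.5), and the coordinate descent gives $\Ea[B+2P,B']$, which is split; so your claim that ``for $m\geq 2$ the extension class survives'' fails for the configuration you write down. The configurations that are genuinely non-split are the \emph{folded} ones: the middle maximal order lies in a class $[\Da_{B+nP}]$ deep in the cusp and \emph{both} outer maximal orders lie in $[\Da_{B+(n-1)P}]$. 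Non-splitness then follows from a degree count: a split order of level $2P$ has containing maximal orders $\Da_D,\Da_{D-P},\Da_{D-2P}$, and once $\deg(D-P)>0$ the outer two cannot be conjugate because $|\deg(D)|\neq|\deg(D-2P)|$. This obstruction (the paper's Lemma 4.3), together with the observation that every order in $\mathbb{O}_D$ sits inside only finitely many orders of $\mathbb{O}_{2P}$ and that any order containing a split order is split, is what actually yields infinitely many non-split classes of level $D$; your step (a) would need to be replaced by an argument of this kind.
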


When $X=\mathbb{P}^1$ is the projective line, next result can be considered
 a partial generalization of Grothendick-Birkhoff Theorem.

\begin{theorem}\label{t2}
Assume $X=\mathbb{P}^1$ is the projective line. Then the following statements are equivalent for any 
effective divisor $D$:
\begin{enumerate}
\item Every Eichler orders of level $D$ is split.
 \item $D\leq P_1+P_2$, where $P_1\neq P_2$ and $\deg(P_1)=\deg(P_2)=1$. 
\end{enumerate}
\end{theorem}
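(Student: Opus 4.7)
The plan is to prove each direction separately, reducing in both cases to the question of when the Eichler order $\Ea = \Da_\Lambda \cap \Da_{\Lambda'}$ admits a global idempotent, equivalently when the pair $(\Da_\Lambda, \Da_{\Lambda'})$ admits a common splitting basis.

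For $(2) \Rightarrow (1)$, given $D \leq P_1 + P_2$ with $P_1 \neq P_2$ of degree one, I would use Theorem GB to put $\Da_\Lambda$ into the standard split form $\sbmattrix{\oink_X}{\mathfrak{L}^{-dP_\infty}}{\mathfrak{L}^{dP_\infty}}{\oink_X}$ for some $d \geq 0$, then compute its global sections. The nontrivial idempotents of $\Da_\Lambda(X)$ parametrize the splittings of $\Da_\Lambda$; for each such idempotent one reads off the associated local splitting pair at every point $P$ in the support of $D$, and $\Ea$ is split iff some global idempotent yields local pairs containing the local directions $\ell_P \in \mathbb{P}^1(\finitum)$ of $\Da_{\Lambda'}$ at every such $P$. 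When $d = 0$ the idempotents run over all rank-one matrices in $\matrici_2(\finitum)$ and realize every possible pair of distinct $\finitum$-rational lines at once; when $d \geq 1$ the idempotents are triangular, with one line in the pair fixed at $[1:0]$ and the other given by $[-\beta(P):1]$ as $\beta$ varies over $H^0(\mathfrak{L}^{dP_\infty})$, so the two constraints at $P_1, P_2$ reduce to a linear system in the $d+1$ coefficients of $\beta$, solvable by Vandermonde interpolation because $P_1 \neq P_2$.

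For $(1) \Rightarrow (2)$, I would argue the contrapositive by constructing a non-split $\Ea$ in three cases. If $D$ is not multiplicity-free, Theorem~\ref{t1} directly supplies infinitely many non-split Eichler orders of level $D$. If $D$ is multiplicity-free but some point $P$ in its support has $\deg(P) \geq 2$, I would take $\Da_\Lambda = \matrici_2(\oink_X)$ and construct $\Da_{\Lambda'}$ whose local direction at $P$ is a non-$\finitum$-rational line in $\mathbb{P}^1(\kappa(P))$ (possible precisely because $\deg(P) \geq 2$); since every idempotent of $\Ea$ lies in $\Da_\Lambda(X) = \matrici_2(\finitum)$ and hence defines only $\finitum$-rational lines at $P$, no global splitting can capture $\ell_P$. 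If $D$ is multiplicity-free with every point of degree one and at least three points, I would again take $\Da_\Lambda = \matrici_2(\oink_X)$ and assign three pairwise-distinct lines in $\mathbb{P}^1(\finitum)$ at three points of the support (possible since $|\mathbb{P}^1(\finitum)| \geq 3$); a splitting pair has only two lines and cannot contain three distinct ones, so $\Ea$ is non-split.

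The principal hurdle is the $d \geq 1$ branch of the backward direction: there the global idempotents of $\Da_\Lambda$ form a restricted family parametrized by $H^0(\mathfrak{L}^{dP_\infty})$, and one must verify uniformly -- especially when one of $P_1, P_2$ coincides with $P_\infty$, where the ``evaluation'' of $\beta$ becomes a leading-coefficient functional rather than a pointwise value -- that the resulting linear system on $\beta$ remains solvable.
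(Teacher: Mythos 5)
Your proposal is correct, but it follows a genuinely different route from the paper's. For $(2)\Rightarrow(1)$ the paper does not solve for idempotents directly: it computes the classifying graphs $\mathfrak{c}_{P_1}(\mathbb{O}_0)$ and $\mathfrak{c}_{P_2}(\mathbb{O}_{P_1})$ in Lemma \ref{l41}, checks that every edge class is represented on the apartment of the standard idempotent, and then passes to levels $D<P_1+P_2$ via the containment $\Ea(X)\supseteq\Ea'(X)$. Your interpolation argument replaces all of that: after normalizing one corner to $\Da_{dP_\infty}$, the nontrivial global idempotents for $d\geq1$ are exactly $\sbmattrix{1}{\beta}{0}{0}$ and its complement with $\beta\in\mathfrak{L}^{dP_\infty}(X)$, the local constraints are evaluations of $\beta$ at the support points, and the hurdle you flag at $P_\infty$ resolves exactly as you predict --- in the basis adapted to the local lattice there, the second eigenline reduces to $\left[-\overline{\beta\pi_\infty^d}:1\right]$, i.e.\ the coefficient of $t^d$, and that functional together with evaluation at a finite point (or two evaluations at distinct finite points) are independent on the $(d+1)$-dimensional space once $d\geq1$, while the case $d=0$ is handled by $\matrici_2(\finitum)$ realizing every pair of distinct rational lines. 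For $(1)\Rightarrow(2)$, your cases B and C are a clean pigeonhole on the at most two, necessarily $\finitum$-rational, eigenlines of a nontrivial idempotent of $\Da_\Lambda(X)=\matrici_2(\finitum)$; they replace Lemma \ref{l34} and the second half of Lemma \ref{l41}, which instead extract non-splitness from degree obstructions on divisors ($|B_0-B'_0|=P$ being impossible) read off the classifying graphs. Your case A coincides in substance with the paper's treatment, since Theorem \ref{t1} is proved independently of Theorem \ref{t2} and the relevant implication rests on Lemma \ref{l42}, so the graph-theoretic input is still needed there. What your route gives up is the finer information the paper extracts along the way --- the explicit quotient graphs and the uniqueness of the non-split class in $\mathbb{O}_{P_1+P_2+P_3}$, which are reused in Section 5; what it buys is a shorter, essentially linear-algebraic and self-contained proof of the stated equivalence outside of case A.
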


The main tool in the sequel is the concept of quotient graph, specifically
quotients of the local Bruhat-Tits tree at some place $P$.
 This idea is due to J.-P. Serre who studied the relation between these
quotients and the structure of the arithmetic groups defining them  \cite[\S II.2]{trees}. These are usually
unit groups of maximal orders, and the corresponding quotient is the S-graph, as defined in \cite{cqqgvro}.
 In fact, Serre himself computed the S-graph when $X=\mathbb{P}^1$ and $P$ is a place of degree
 $4$ or less, using tools from algebraic geometry.
 An elementary proof of Serre's result was given in \cite{Mason1}, and some partial generalizations
appear in \cite{Mason2} and \cite{Mason6}. These quotients have been used
to study non-congruence subgroups of Drinfeld modular groups, see \cite{Mason3} or \cite{Mason4}.
We ourselves in \cite{cqqgvro} gave a recursive formula to compute these graphs for a place 
$P\in|\mathbb{P}^1|$ of arbitrary
degree using the theory of spinor genera, and we introduced there the concept of C-graph
(c.f. \S3), which is
used here for the study of conjugacy classes in a genus.  A closed formula for the S-graph for a
maximal order at any place $P\in|\mathbb{P}^1|$ has been given by 
R. K\H ohl, B. M\H uhlherr and K. Struyve 
in \cite{Kohl}, using a different method involving double cosets for simultaneous 
actions on two local trees. The S-graph has also been computed for places of degree 1 on
 an elliptic curve \cite{takahashi}.  M. Papikian has studied the S-graph when $\matrici_2(K)$ is replaced
by a division algebra \cite{Papikian}. Although the theory only requires the orders to
be maximal at the specific place $P$, as far as we can tell the present work is the first attempt 
to use these graphs to study Eichler orders, or any non-maximal order, over a function field.

\begin{remark}
Hijikata's characterization has been generalized to higher dimensional algebras in the local setting
by Shemanske in \cite{soacpib} via Bruhats-Tits Buildings. Bruhat-Tits trees and
buildings play a significant role in the study of the selectivity problem,
 understanding when a commutative order embeds into all, or just into some, 
of the orders in a particular genus \cite{FriedmannQ}, \cite{lino1}, \cite{lino3}. 
This problem arises naturally from questions regarding spectral properties of hyperbolic varieties
\cite{vigneras2}, \cite{lino2}. \end{remark}

Computing quotient graphs provide important information on the structure of a group $G$.
One way to do this is to provide a fundamental domain for $G$, in some suitable Bruhat-Tits tree. 
We do this in \S5 for some congruence subgroups of 
the general linear group $\mathrm{PGL}_2(\mathbb{F}[t])$. 
To make these ideas precise, we recall that 
$\mathrm{PGL}_2(\mathbb{F}[t])\subseteq\mathrm{PGL}_2(K_\infty)$
acts naturally on the Bruhat-Tits tree for the completion at infinite $K_\infty=\mathbb{F}((t^{-1}))$
of $\mathbb{F}(t)$, interpreted as the Ball-tree described in \cite{omeat}.
The vertices of the Ball-tree are in correspondence with the closed balls in $K_\infty$, while its ends
are the elements in the set of $K_\infty$-points $\mathbb{P}^1(K_\infty)$.

\begin{theorem}\label{t4}
Let $N=(t-\lambda_1)\cdots(t-\lambda_n)$ a square-free polynomial with all its roots in $\mathbb{F}$. 
Let $\mathfrak{s}$ be the smallest subtree containing the ends $0$, $\infty$ and $1/M$, for every
proper monic divisor $M$ of $N$. Then the congruence subgroup
$$\Gamma_N=\left\{\bmattrix abcd\in\mathrm{GL}_2(\mathbb{F}[t])\Bigg|
c\equiv 0\ (\mathrm{mod}\ N)\right\}$$
has a fundamental domain of the form $\mathfrak{s}\cup\mathfrak{f}$ for a finite graph
$\mathfrak{f}$.
\end{theorem}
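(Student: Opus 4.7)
Plan: I would realize $\Gamma_N$ as the group of global units of a specific $\mathbb{A}^1$-Eichler order $\Ea_0$ and reinterpret $\Gamma_N\backslash\mathfrak{T}_\infty$ as a classifying graph for $\mathbb{P}^1$-extensions of $\Ea_0$; Theorem GB provides the rays heading towards the cusps, while Theorem \ref{t1} bounds the finite part.

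First, let $\Ea_0$ be the Eichler $\mathbb{A}^1$-order whose $U$-sections are $\bmattrix{\oink_X(U)}{\oink_X(U)}{N\oink_X(U)}{\oink_X(U)}$; its level is the multiplicity-free divisor $D_N=\sum_{i=1}^n P_{\lambda_i}$, where $P_{\lambda_i}$ is the closed point $t=\lambda_i$. Then $\Gamma_N=\Ea_0(\mathbb{A}^1)^\times$. Each vertex $v\in\mathfrak{T}_\infty$ is a maximal local order at $\infty$ which, glued to $\Ea_0$, determines a maximal $\mathbb{P}^1$-order $\Da_v$. Two vertices lie in the same $\Gamma_N$-orbit exactly when the corresponding $\Da_v$ are $\mathrm{GL}_2(K)$-conjugate; thus $\Gamma_N\backslash\mathfrak{T}_\infty$ parametrizes conjugacy classes of such extensions, with edges encoding Eichler $\mathbb{P}^1$-orders extending $\Ea_0$.

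Next, by Theorem GB each $\Da_v$ admits a basis $\{f_1,f_2\}$ diagonalizing it to the form $\sbmattrix{\oink_X}{\mathfrak{L}^{-D}}{\mathfrak{L}^{D}}{\oink_X}$. The condition that this splitting be compatible with $\Ea_0$ restricts the change of basis, modulo $\Gamma_N$, to $\sbmattrix{1}{1/M}{0}{1}$ for some monic divisor $M$ of $N$. The resulting end is $1/M\in\mathbb{P}^1(K)$; in the extreme cases $M=1$ and $M=N$ this end is $\Gamma_N$-equivalent to $0$ and $\infty$ respectively, while for proper $M$ it is a genuinely new cusp. As the exponent of $\mathfrak{L}^{\pm D}$ varies, each admissible basis traces out an infinite ray in the quotient graph towards the corresponding end, and their union is exactly $\mathfrak{s}$.

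Finally, a vertex $v\notin\mathfrak{s}$ corresponds to a $\Da_v$ admitting no diagonalizing basis compatible with $\Ea_0$. Intersecting $\Da_v$ with a suitable neighboring maximal order in the direction of $\mathfrak{s}$ produces a non-split Eichler $\mathbb{P}^1$-order $\Ea_v$ whose level can be reduced to exactly $D_N$; since $D_N$ is multiplicity-free, Theorem \ref{t1} caps the number of such conjugacy classes, and hence $\mathfrak{f}$ is finite. The main obstacle lies in precisely this cuspidal-reduction step: one must show that non-splitness cannot persist arbitrarily deep into the tree at $\infty$, and that sliding $v$ along the tree towards $\mathfrak{s}$ decreases the $\infty$-multiplicity of the level of $\Ea_v$ without breaking its $\Gamma_N$-orbit structure, so that Theorem \ref{t1} genuinely controls the finite part.
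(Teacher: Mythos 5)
Your starting point agrees with the paper's: $\Gamma_N$ is (up to the center) the unit group of the Eichler order of level $D_N=P_{\lambda_1}+\cdots+P_{\lambda_n}$, and $\Gamma_N\backslash\mathfrak{t}(K_\infty)$ is to be understood through the orders obtained by varying the completion at $\infty$. But two of your key steps are wrong as stated. First, the order $\Da_v$ obtained by gluing a maximal local order at $\infty$ to $\Ea_0$ is \emph{not} a maximal $\mathbb{P}^1$-order; it is an Eichler order of level $D_N$, so Theorem GB does not apply to it. Indeed, Lemma \ref{l41} shows that non-split orders exist in $\mathbb{O}_{P_1+P_2+P_3}$ already for three points of degree one, which is precisely why the finite piece $\mathfrak{f}$ is unavoidable; your claim that ``each $\Da_v$ admits a diagonalizing basis'' is exactly the assertion that fails on $\mathfrak{f}$. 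Second, $\Gamma_N$-orbits of vertices do \emph{not} coincide with conjugacy classes of the glued orders: the quotient $\Gamma_N\backslash\mathfrak{t}(K_\infty)$ is the S-graph, which covers the classifying graph of conjugacy classes with degree up to $2^n=|\Gamma/\Gamma_0|$ (the Atkin--Lehner-type quotient, cf.\ Remark \ref{r35}). The paper exploits this discrepancy rather than ignoring it: iterating Example \ref{ex42} shows the classifying graph $\mathfrak{c}_{P_\infty}(\mathbb{O}_{D_N})$ is a finite graph with exactly \emph{one} cusp, so the S-graph is a finite graph with at most $2^n$ cusps, and the $2^n$ ends of $\mathfrak{s}$ must account for all of them. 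This replaces your ``cuspidal-reduction'' step, which you yourself flag as unresolved and which, as formulated (reducing the multiplicity of the level at $\infty$), does not connect to Theorem \ref{t1} in any precise way.

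The more serious omission is that a fundamental domain of the form $\mathfrak{s}\cup\mathfrak{f}$ requires $\mathfrak{s}$ to inject into the quotient, i.e., that distinct vertices of $\mathfrak{s}$ lie in distinct $\Gamma_N$-orbits. This is the actual content of the paper's proof: a page-long explicit computation with the balls $B_x^{|r|}$, the matrices $h_i=\sbmattrix{x_i}{\pi^{r_i}}{1}{0}$, and the congruence $c\equiv 0\ (\mathrm{mod}\ N)$, carefully using that the $x_i$ are inverses of monic divisors of a square-free $N$. Nothing in your proposal addresses this; without it the $2^n$ rays you construct could collapse onto one another in the quotient and the claimed shape of the fundamental domain would not follow. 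So the architecture of your argument is recognizably the paper's, but both load-bearing walls --- the finiteness of the non-cuspidal part and the injectivity of $\mathfrak{s}$ --- are missing or misplaced.
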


See \S5 for the precise definition of fundamental domain that we use here.

\section{Completions and spinor genera}\label{ndos}

In this section we review the basic facts about spinor genera and spinor class fields of orders.
See \cite{abelianos} for details.

Let $|X|$ be the set of closed points in the smooth projective curve $X$.
 For every such point $P\in |X|$ we let $K_P$ be the completion at $P$ of the function field $K=K(X)$. 
We denote by $\ad=\ad_X$ the adele ring of $X$, i.e., the subring of elements $a=(a_P)_P\in\prod_{P\in|X|}K_P$
for which all but a finite number of coordinates $a_P$ are integral. For any finite dimensional 
vector space $V$ over $K$ we define its adelization $V_\ad=V\otimes_K\ad\cong\ad^{\mathrm{dim}_KV}$,
 and we give it the adelic topology \cite[\S IV.1]{weil}.  Note that $K_\ad\cong\ad$ canonically.
We identify the ring of $\ad$-linear maps $\mathrm{End}_\ad(V_\ad)$ with the adelization $\big(\mathrm{End}_K(V)\big)_\ad$.
For any $X$-lattice $\Lambda$, the completion $\Lambda_P$ is defined as the closure of $\Lambda(U)$
in $V_P$ for an arbitrary affine open set $U$ containing $P$. This definition is independent of the choice of $U$.
 Note that, for every affine subset $U\subseteq X$,
the $\oink_X(U)$-module $\Lambda(U)$ is an $\oink_X(U)$-lattice as defined in \cite{Om}.
 In this work a lattice always means an $X$-lattice or $X$-bundle as in \S1, while we use 
affine lattice for the classical concept.
 The same observation apply to orders and the notations $\Ra$ and $\Ra(U)$.
Just as in the affine case, $X$-lattices are determined
 by their local completions $\Lambda_P$, where $P$ runs over the set $|X|$,
 in the following sense:
\begin{enumerate}
\item For any two lattices $\Lambda$ and $\Lambda'$ in a vector space $V$, $\Lambda_P=\Lambda'_P$
 for almost all $P$,
\item if $\Lambda_P=\Lambda'_P$ for all $P$, then $\Lambda=\Lambda'$, and 
\item every family $\{\Lambda''(P)\}_P$ of local lattices satisfying  $\Lambda''(P)=\Lambda_P$ for almost all $P$ 
is the family of completions of a global lattice $\Lambda''$ in $V$.
\end{enumerate}
The same results apply to orders. 
We also define the adelization $\Lambda_\ad=\prod_{P\in|X|}\Lambda_P$, which is open and compact as
a subgroup of $V_\ad$. This applies in particular to the ring of integral adeles $\oink_\ad=(\oink_X)_\ad
\subseteq K_\ad=\ad$.
It follows from property (3) above that every open and compact $\oink_\ad$-sub-module of $V_\ad$ is 
the adelization of a lattice.
 For every $X$-lattice $\Lambda$ and every invertible element $a\in \mathrm{End}_\ad(V_\ad)$,
 the adelic image $L=a\Lambda$ is the unique  $X$-lattice satisfying $L_\ad=a\Lambda_\ad$. 
The adelic image $L$ thus defined inherit all local properties of the original $X$-lattice $\Lambda$. 
For instance,  adelic images of orders and maximal orders under conjugation are orders
and maximal orders, respectively. In particular, if we fix a maximal $X$-order $\Da$, all maximal $X$-orders
in $\mathbb{M}_2(K)$
 have the form $\Da'=a\Da a^{-1}$ for $a\in\mathrm{GL}_2(\ad):=\matrici_2(\ad)^*$.
 This conjugation must be interpreted as an adelic image. 
More generally, for any fixed order $\Ra$ of maximal rank, the set of orders of the form
$a\Ra a ^{-1}$, for $a\in \mathrm{GL}_2(\ad)$, is called the genus $\mathrm{gen}(\Ra)$. 
The set of maximal $X$-orders is a genus \cite{Eichler2}.

Locally, there is a well defined distance $d_P$ between maximal orders in $\matrici_2(K_P)$. In fact, 
we have $d_P(\Da_P,\Da'_P)=d$ if, in some basis,  both orders take the form
$$\Da_P=\bmattrix {\oink_P}{\oink_P}{\oink_P}{\oink_P}\quad\textnormal{and}\quad
\Da'_P=\bmattrix {\oink_P}{\pi_P^d\oink_P}{\pi_P^{-d}\oink_P}{\oink_P},$$
where $\pi_P$ is a local uniformizing parameter in $K_P$. 
Intersections of orders can be computed locally, in the sense that $\Da_P\cap\Da'_P=(\Da\cap\Da')_P$
for every pair of orders. We define an Eichler order as
the intersection of two maximal orders. This is certainly a local property. 
The level of a local Eichler order is by definition the distance between the
maximal orders defining it. In the local setting, there is a unique pair of maximal orders whose intersection is a
given Eichler order.  Two local Eichler orders are conjugate if and only if their levels coincide.
We conclude that two global Eichler orders $\Ea$ and $\Ea'$ belong to the same genus
precisely when the local levels coincide at all places. Globally, the distance between two maximal orders
$\Da$ and $\Da'$ is defined as the effective divisor
$$D=D(\Da,\Da')=\sum_{P\in|X|}d_P(\Da_P,\Da'_P)P.$$
If $\Da=\Da_\Lambda$ and $\Da'=\Da_{\Lambda'}$, then  $D=D(\Da_\Lambda,\Da_{\Lambda'})$
is, by definition, the level  $\lambda(\Ea_{\Lambda,\Lambda'})$
 of the Eichler order $\Ea_{\Lambda,\Lambda'}$. Two Eichler order
belong to the same genus if and only if they have the same level.
The genus of Eichler orders of level $D$, for any effective divisor $D$, is denoted $\mathbb{O}_D$.

   Two $X$-orders of maximal rank  $\Ra$ and $\Ra'$ in $\matrici_2(K)$ are in the same spinor genus if 
$\Ra'=a\Ra a^{-1}$
for some element $a=bc$ where $b\in\matrici_2(K)$ and $c$ is an adelic matrix satisfying $\mathrm{det}(c)=1_\ad$. 
We write $\Ra'\in\mathrm{Spin}(\Ra)$ in this case. Equivalently,
 two orders $\Ra$ and $\Ra'$ are in the same spinor genus
if and only if they are in the same genus and 
the rings $\Ra(U)$ and $\Ra'(U)$ are conjugate for every affine open subset $U\subseteq X$
(c.f. Remark \ref{rem21}).
 The set of spinor genera in a genus is described via
the spinor class field, which is defined as the class field corresponding to the group 
$K^*H(\Ra)\subseteq \ad^*=:J_X$, where
$$H(\Ra)=\{\mathrm{det}(a)|a\in\mathrm{GL}_2(\ad),\  a\Ra a^{-1}=\Ra\}.$$
This field depends only on the genus $\mathbb{O}=\mathrm{gen}(\Ra)$ of $\Ra$. 
We denote it $\Sigma=\Sigma(\mathbb{O})$.

Let $t\mapsto [t,\Sigma/K]$ denote the Artin map on ideles. There exists a distance map
$\rho:\mathbb{O}\times\mathbb{O}\rightarrow\mathrm{Gal}\big(\Sigma/K\big)$,
defined by $\rho(\Ra,\Ra')=[\det(a),\Sigma/K]$, for any adelic element $a\in\mathrm{GL}_2(\ad)$
satisfying $\Ra'=a\Ra a^{-1}$. This distance map has the following properties:
\begin{enumerate}
\item $\rho(\Ra,\Ra'')=\rho(\Ra,\Ra')\rho(\Ra',\Ra'')$ for any triplet $(\Ra,\Ra',\Ra'')\in\mathbb{O}^3$, and
\item $\rho(\Ra,\Ra')=\mathrm{Id}_{\Sigma(\mathbb{O})}$ if and only if  $\Ra'\in\mathrm{Spin}(\Ra)$.
\end{enumerate}
In particular, for the genus $\mathbb{O}_0$ of maximal orders, the corresponding
distance $\rho_0:\mathbb{O}_0^2\rightarrow\mathrm{Gal}\big(\Sigma_0/K\big)$,
where $\Sigma_0=\Sigma(\mathbb{O}_0)$, is related to the divisor valued distance by the formula
$\rho_0(\Da,\Da')=[[D(\Da,\Da'),\Sigma_0/K]]$,
where $D\mapsto [[D,\Sigma_0/K]]$ is the Artin map on divisors.
 Note however that the distance $\rho_0$ is trivial between isomorphic orders,
which does not hold for  the divisor valued distance.

The spinor class field $\Sigma(D)=\Sigma(\mathbb{O}_D)$, for Eichler orders of level $D=\sum_Pa_PP$,
 is the maximal subfield of $\Sigma_0$ splitting at every place $P$ for which
$a_P$ is odd. The corresponding distance $\rho_D$ is given by the formula
$$\rho_D(\Ea_{\Lambda,\Lambda'},\Ea_{L,L'})=\rho_0(\Da_\Lambda,\Da_L)\Big|_{\Sigma(D)}.$$
The preceding formula follows from \cite[Prop. 6.1]{scffgeo} and the discussion thereafter.

\begin{remark}\label{rem21}
When $\matrici_2(K)$ is replaced by another quaternion algebra $\alge$, the condition for two orders to be
in the same spinor genus goes as follows:
\emph{The orders $\Ra$ and $\Ra'$ are in the same spinor genus if and only if
$\Ra(U)$ and $\Ra'(U)$ are conjugate for any open set $U$ whose complement has at least one place
splitting $\alge$} (c.f. \cite[\S2]{abelianos}). For a matrix algebra, this is equivalent to $U\neq X$.
\end{remark}

\section{Eichler orders and trees}\label{ebundles}

In all of this work, a graph $\mathfrak{g}$ is a pair of sets $V=V(\mathfrak{g})$ and $E=E(\mathfrak{g})$,
 called the vertex set and the edge set, together
with three functions $s,t:E\rightarrow V$ and $r:E\rightarrow E$, 
called respectively source, target and reverse,
satisfying the identities
$$r(a)\neq a,\quad r\big(r(a)\big)=a\textnormal{ and }s\big(r(a)\big)=t(a)$$ for every edge $a$. 
A simplicial map $\gamma:\mathfrak{g}\rightarrow \mathfrak{g}'$ between graphs is a pair of functions
$\gamma_V:V(\mathfrak{g})\rightarrow V(\mathfrak{g}')$ and 
$\gamma_E:E(\mathfrak{g})\rightarrow E(\mathfrak{g}')$
 preserving these functions, and a similar convention applies
to group actions. A group $\Gamma$ acts
on a graph $\mathfrak{g}$ without inversions if
 $g.a\neq r(a)$ for every edge $a$ and every element $g\in\Gamma$. An action without inversions defines
a quotient graph. As mentioned in \S1,
 Basse-Serre Theory allows us to determine the structure of the group $\Gamma$ if we 
understand the quotient graph and the stabilizer of each vertex or edge,
 see \cite[\S I.5]{trees} for an account on this subject. If the action has inversions, we can still 
define a quotient graph by replacing $\mathfrak{g}$ by its barycentric subdivision and ignoring the 
new vertices unless their images 
in the quotient have valency one, in which case
 they are called nonvertices\footnote{we use the term "virtual endpoint" in some of our previous work,
but the use of the word "endpoint" seems to be confusing for some readers.},
 see \cite[Remark 1.6]{cqqgvro} or \cite[Remark 3.1]{rouidqo} for details.
The edge joining a vertex and a nonvertex is called a half-edge. It can be interpreted as an edge that
has been "folded in half" by an inversion. 

The real-line graph $\mathfrak{r}$ is defined by a collection of vertices $\{n_j|j\in\enteri\}$ and a collection of
edges $\{a_j,r(a_j)|j\in\enteri\}$ satisfying both $s(a_j)=n_j$ and $t(a_j)=n_{j+1}$.  An integral interval is
a connected subgraph of $\mathfrak{r}$. A finite integral interval $\mathfrak{i}_{k,k'}$ is completely determined by
its first vertex $n_k$ and it last vertex $n_{k'}$. Its length is $k'-k$.
The notations $\mathfrak{i}_{-\infty,k}$,  $\mathfrak{i}_{k,\infty}$ and  
$\mathfrak{i}_{-\infty,\infty}=\mathfrak{r}$  are defined analogously.
In general, we identify a simplicial map
 $\gamma:\mathfrak{i}_{k,k'}\rightarrow G$ with any shift, i.e., any map
 $\gamma_t:\mathfrak{i}_{k+t,k'+t}\rightarrow G$ satisfying $(\gamma_t)_E(a_{r+t})=\gamma_E(a_r)$.
The reverse of a simplicial map $\gamma:\mathfrak{I}_{0,2}\rightarrow G$  is the map 
$\gamma':\mathfrak{i}_{0,2}\rightarrow G$
satisfying $\gamma'_E(a_1)=\gamma_E\Big(r(a_0)\Big)$ and $\gamma'_E(a_0)=\gamma_E\Big(r(a_1)\Big)$.
This definition generalizes easily to longer intervals.
 A path in a graph $\mathfrak{g}$ is an injective simplicial map 
$\gamma:\mathfrak{i}\rightarrow \mathfrak{g}$,
where $\mathfrak{i}$ is an integral interval. A path is finite of length $k$, or infinite in one or two directions, if
so is the corresponding integral interval. The latter, i.e., a map $\mu:\mathfrak{r}\rightarrow \mathfrak{g}$, is called
a maximal path in the sequel. We also say a ray for a map 
$\rho:\mathfrak{i}_{0,\infty}\rightarrow \mathfrak{g}$.
A line is a pair $\{\gamma,\gamma'\}$ of reverse paths.
 By abuse of notation, we often say
let $\gamma:\mathfrak{i}_{0,k}\rightarrow \mathfrak{g}$ be a line, 
but it must me understood that the reverse $\gamma'$
denotes the same line.

Locally, maximal orders in $\matrici_2(K_P)$, or equivalently
homothety classes of lattices in $K_P^2$, are in correspondence with the vertices of the Bruhat-Tits tree
 $\mathfrak{t}(K_P)$ for $\mathrm{PSL}_2(K_P)$ \cite[\S II.1]{trees}. The vertices corresponding to two maximal orders
are neighbors if and only if their local distance, as defined in \S2, is $1$. In this setting, local Eichler orders 
$ \Ea$ of level $k$ are in correspondence with finite lines
$\gamma:\mathfrak{i}_{0,k}\rightarrow\mathfrak{t}(K_P)$.
 In fact, there is a unique path $\gamma=\gamma(v,w)$ conecting every ordered pair $(v,w)=\Big(\gamma_V(n_0),\gamma_V(n_k)\Big)$ of vertices in the tree.
 If we denote by $\Da_v$ the maximal order corresponding
to the vertex $v$, the Eichler order corresponding to a line $\gamma$ as above is $\Ea_\gamma=
\Da_v\cap\Da_w$. The
orders $\Da_{\gamma_V(n_i)}$, for $0\leq i\leq k$, are precisely the maximal orders containing $\Ea_\gamma$.
In the notations of \cite{scffgeo}, the largest subgraph whose vertices contain an order $\Ha$ is denoted $\mathfrak{S}_0(\Ha)$, and in this setting we have
$\mathfrak{S}_0(\Ea_{\gamma})=\gamma(\mathfrak{i}_{0,k})$.
In particular, the local maximal orders in the expression $\Ea=
\Da\cap\Da'$ are unique. 

For a global Eichler order $\Ea$ of level $D=\lambda(\Ea)=\sum_P\alpha_PP$, 
the set of maximal orders containing $\Ea$ is in correspondence with the set of vertices in the finite grid
$\mathbb{S}(\Ea)=\prod_P\mathfrak{S}_0(\Ea_P)$, where $P$ runs over the set of places at which 
$\alpha_P>0$.
Any vertex $v$ of this grid corresponds to a global maximal order $\Da_v$ containing $\Ea$ and conversely.
For any pair $(v_1,v_2)$ of opposite vertices of this grid, the corresponding maximal orders satisfy
$\Ea=\Da_{v_1}\cap\Da_{v_2}$, and for all these pairs the divisor valued distance defined in \S2 is $D$. 
These grids are seen as sub-complexes of a suitable product of Bruhat-Tits trees.
 Fix an effective divisor $D=\sum_P\alpha_PP$. 
Any grid of the form $\mathbb{S}(\Ea)$ for $\lambda(\Ea)=D$  is called a concrete $D$-grid. 
Note that $\mathrm{PGL}_2(K)$ acts by conjugation on the set of concrete $D$-grids. Orbits
of concrete $D$-grids are called ideal $D$-grids. Next result is immediate from the definitions:

\begin{proposition}
For any efective divisor $D$,
the set of conjugacy classes of Eichler orders of level $D$ in $\mathbb{M}_2(K)$
 is in correspondence with the set of ideal $D$-grids. 
\end{proposition}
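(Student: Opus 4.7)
The plan is to verify that the assignment $\Ea \mapsto \mathbb{S}(\Ea)$ is a $\mathrm{PGL}_2(K)$-equivariant bijection between the set of Eichler orders of level $D$ in $\matrici_2(K)$ and the set of concrete $D$-grids. The proposition then follows formally by passing to orbits on both sides, since conjugacy classes of orders are $\mathrm{PGL}_2(K)$-orbits of Eichler orders and, by definition, ideal $D$-grids are $\mathrm{PGL}_2(K)$-orbits of concrete $D$-grids.

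First I would establish injectivity of $\Ea \mapsto \mathbb{S}(\Ea)$. Suppose $\mathbb{S}(\Ea) = \mathbb{S}(\Ea')$. Choose any pair of opposite vertices $(v_1,v_2)$ of this common grid, which, by the paragraph preceding the proposition, correspond to a pair of global maximal orders whose intersection is $\Ea$ on one hand and $\Ea'$ on the other; hence
$$\Ea \;=\; \Da_{v_1}\cap\Da_{v_2} \;=\; \Ea'.$$
Surjectivity onto concrete $D$-grids is immediate from the definition, and the fact that $\mathbb{S}(\Ea)$ really is a $D$-grid when $\lambda(\Ea)=D$ has already been recorded in the discussion above the statement.

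Next I would verify $\mathrm{PGL}_2(K)$-equivariance. For $g \in \mathrm{PGL}_2(K)$, conjugation sends global maximal orders to global maximal orders, and $\Da \supseteq \Ea$ holds if and only if $g\Da g^{-1} \supseteq g\Ea g^{-1}$. Locally at each place $P$, conjugation by $g$ acts on the Bruhat--Tits tree $\mathfrak{t}(K_P)$ through $\mathrm{PGL}_2(K_P)$, preserves the distance $d_P$, and therefore carries the line $\mathfrak{S}_0(\Ea_P)$ to the line $\mathfrak{S}_0\bigl((g\Ea g^{-1})_P\bigr)$. Taking the product over the support of $D$ gives $\mathbb{S}(g\Ea g^{-1}) = g\cdot \mathbb{S}(\Ea)$, so the bijection above respects both actions.

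No substantive obstacle is expected; the whole argument is essentially a bookkeeping exercise assembling (i) the local tree correspondence between Eichler orders and finite lines, (ii) the reconstruction $\Ea = \Da_{v_1}\cap\Da_{v_2}$ from any pair of opposite vertices of $\mathbb{S}(\Ea)$, and (iii) the compatibility of completion with conjugation by $\mathrm{PGL}_2(K)$. The only point worth stressing is the use of opposite vertices in step (i), since non-opposite pairs of maximal orders containing $\Ea$ would intersect in a strictly larger Eichler order and would not suffice to recover $\Ea$ itself.
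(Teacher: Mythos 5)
Your argument is correct and is essentially the paper's own: the authors state this proposition as ``immediate from the definitions,'' and your write-up is simply the careful unfolding of that claim, using the same ingredients (the reconstruction $\Ea=\Da_{v_1}\cap\Da_{v_2}$ from opposite vertices of $\mathbb{S}(\Ea)$ and the equivariance of $\Ea\mapsto\mathbb{S}(\Ea)$ under conjugation) already recorded in the discussion preceding the statement. No discrepancy to report.
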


If we write $D=D'+\alpha_PP$, where $D'$ is supported away from $P$, any concrete $D$-grid $\mathbb{S}(\Ea)$
is a paralellotope having two concrete $D'$-grids as opposite faces. These are called the $P$-faces of the $D$-grid.
The $P$-faces of an ideal $D$-grid are well defined as ideal grids. This convention is used in all that follows.

Now let $Q\in |X|$ and let $\mathbb{O}$ be a genus of orders of maximal rank that are maximal at $Q$. Let $U$
be the complement of $\{Q\}$ in $X$. Fix an order $\Ra\in \mathbb{O}$, and let $\Psi$ be the set of orders
$\Ra'\in\mathbb{O}$ satisfying $\Ra'(U)=\Ra(U)$. These orders are called the $Q$-variants
of $\Ra$. An order $\Ra'\in\Psi$ is completely determined by the 
local order $\Ra'_Q$, and the set of conjugacy classes of these orders is in correspondence with the vertices
of the classifying graph $\mathfrak{c}_Q(\Ra)=\Gamma\backslash\mathfrak{t}(K_Q)$, where $\mathfrak{t}(K_Q)$ is the local
Bruhat-Tits tree at $Q$, and $\Gamma$ is the stabilizer of $\Ra(U)$ in $\mathrm{PGL}_2(K)$. 
 As orders in the same spinor genera restrict to conjugate orders in every affine subset,
every conjugacy class in a given spinor genus $\mathrm{Spin}(\Ra)$ corresponds 
to a unique vertex in $\mathfrak{c}_Q(\Ra)$. The orders in $\Psi$ belong to either one or two spinor 
genera, according two whether  $[[Q,\Sigma(\mathbb{O})/K]]$ is trivial or not, and in 
the later case the quotient graph is bipartite. The classifying graph $\mathfrak{c}_Q(\mathbb{O})$ is defined
as the disjoint union of the graphs corresponding to all spinor genera or pairs of spinor genera. 
Note that this is a straightforward generalization of the definition in \cite{cqqgvro}.

Two orders $\Ra',\Ra''\in\Psi$ are called $Q$-neighbors if the corresponding vertices $\Ra'_Q$
and $\Ra''_Q$ are neighbors in the Bruhat-Tits tree.
Two $Q$-neigbors have equal completions at each place other that $Q$, so next
result is immediate from the definitions:

\begin{proposition} Let $D$ be an efective divisor supported away from the place $Q$.
The vertices of the classifying graph $\mathfrak{c}_Q(\mathbb{O}_D)$
 are in correspondence with the ideal $D$-grids, while its pairs of mutually reverse
edges are in correspondence with the ideal $(D+Q)$-grids. The endpoints
 of an edge are the vertices
corresponding to the $Q$-faces of the grid corresponding to that edge.  
\end{proposition}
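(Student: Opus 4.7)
The plan is to unpack the definition of $\mathfrak{c}_Q(\mathbb{O}_D)$ and match its vertex, edge, and incidence data to the grid-theoretic description in the preceding proposition. For the vertex correspondence, I would note that since $D$ is supported away from $Q$, every order in $\mathbb{O}_D$ is maximal at $Q$, so each spinor genus is exhausted by the $Q$-variants of any fixed representative; combined over all spinor genera, the vertices of the disjoint union $\mathfrak{c}_Q(\mathbb{O}_D)$ parametrize the conjugacy classes of $\mathbb{O}_D$, which by the previous proposition are precisely the ideal $D$-grids.

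For edges, the key local computation is that a pair of mutually reverse edges in $\mathfrak{t}(K_Q)$ lifts to an unordered pair of $Q$-neighbors $\{\Ra', \Ra''\} \subseteq \Psi$, and the intersection $\Ea := \Ra' \cap \Ra''$ satisfies $\Ea_P = \Ra'_P = \Ra''_P$ (Eichler of local level $\alpha_P$) at every $P \neq Q$ and is Eichler of local level one at $Q$, hence $\Ea \in \mathbb{O}_{D+Q}$. The associated grid $\mathbb{S}(\Ea)$ then extends $\mathbb{S}(\Ra')$ and $\mathbb{S}(\Ra'')$ by the single $Q$-edge from $\Ra'_Q$ to $\Ra''_Q$, so that $\mathbb{S}(\Ra')$ and $\mathbb{S}(\Ra'')$ are exactly the two $Q$-faces of $\mathbb{S}(\Ea)$. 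Conversely, any concrete $(D+Q)$-grid has a distinguished $Q$-edge; fixing either of its endpoints yields a $Q$-face that is a concrete $D$-grid, producing an unordered pair of $Q$-neighbors whose intersection recovers the starting order.

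To descend to the quotient, I would observe that two such unordered pairs in $\mathfrak{t}(K_Q)$ lie in a common orbit of the stabilizer $\Gamma$ of $\Ra(U)$ exactly when the corresponding $(D+Q)$-Eichler orders are $\mathrm{PGL}_2(K)$-conjugate, so that pairs of mutually reverse edges in $\mathfrak{c}_Q(\mathbb{O}_D)$ biject with ideal $(D+Q)$-grids, and the incidence statement is forced by the explicit description above. The main obstacle, though largely routine, lies in the bookkeeping between the two layers of identification at play: the local $\Gamma$-action on $\mathfrak{t}(K_Q)$ within each spinor genus, and global $\mathrm{PGL}_2(K)$-conjugation across the disjoint union. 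One must in particular verify that orders in distinct spinor genera are never globally conjugate, so that the disjoint-union structure of $\mathfrak{c}_Q(\mathbb{O}_D)$ introduces no spurious collisions on the vertex or edge side.
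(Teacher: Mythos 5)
Your proposal is correct and follows the same route as the paper, which simply records the key observation that two $Q$-neighbors have equal completions at every place other than $Q$ and then declares the proposition immediate from the definitions; your write-up is essentially that remark unpacked, identifying the intersection of a pair of $Q$-neighbors as an order in $\mathbb{O}_{D+Q}$ and matching $\Gamma$-orbits with $\mathrm{PGL}_2(K)$-conjugacy classes. The two points you flag as needing verification (that conjugate orders lie in the same spinor genus, and that the disjoint-union structure creates no collisions) are indeed the only non-trivial bookkeeping, and both follow from the triviality of the distance map $\rho$ on isomorphic orders as stated in \S2.
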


Note that it does not suffice to know the conjugacy class of each vertex in the grid to determine the
conjugacy class of the corresponding Eichler order. For example, this is the reason why classifying
graphs of maximal orders describing in \cite{cqqgvro} have multiple edges.

For any divisor $D=\sum_P\alpha_PP$, its absolute value is defined by $|D|=\sum_P|\alpha_P|P$.

\begin{lemma}
Let $\Ea$ be a split Eichler order of level $D$
that can be written as the intersection of two maximal orders isomorphic
to $\Da_B$ and $\Da_{B'}$. Then there are divisors $B_0$ and $B'_0$ such that:
\begin{enumerate}
\item $B_0$ is linearly equivalent to $B$ or $-B$,
\item $B'_0$ is linearly equivalent to $B'$ or $-B'$ and
\item $|B_0-B'_0|=D$.
\end{enumerate} 
\end{lemma}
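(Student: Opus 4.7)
My plan is to use the criterion for splitness recalled just before Theorem GB: $\Ea$ is split exactly when $\Ea(X)$ contains a nontrivial idempotent. Any nontrivial idempotent in $\matrici_2(K)$ is $\mathrm{GL}_2(K)$-conjugate to $e = \sbmattrix{1}{0}{0}{0}$, so after replacing $\Ea$, $\Da_\Lambda$, $\Da_{\Lambda'}$ by their conjugates under such an element, which preserves both the level $D$ and the isomorphism classes of $\Da_B, \Da_{B'}$, I may assume $e \in \Ea(X) \subseteq \Da_\Lambda(X) \cap \Da_{\Lambda'}(X)$. For each open $U$ and each section $v \in \Lambda(U)$, the decomposition $v = ev + (1-e)v$ expresses $v$ as a sum of sections in $Ke_1$ and $Ke_2$, so $\Lambda = \mathfrak{L}^{B_1} e_1 \oplus \mathfrak{L}^{C_1} e_2$ and, by the same argument, $\Lambda' = \mathfrak{L}^{B_2} e_1 \oplus \mathfrak{L}^{C_2} e_2$. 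I then set $B_0 := C_1 - B_1$ and $B'_0 := C_2 - B_2$.

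To establish condition (3), I apply the formula $\Da_\Lambda = \sbmattrix{\oink_X}{\mathfrak{L}^{-B_0}}{\mathfrak{L}^{B_0}}{\oink_X}$ recalled in the introduction. Locally at each place $P$, the vertex of $\mathfrak{t}(K_P)$ representing $\Lambda_P$ is the homothety class of $\oink_P e_1 \oplus \pi_P^{-(B_0)_P} \oink_P e_2$, and similarly for $\Lambda'_P$ with $B'_0$ in place of $B_0$. The Bruhat--Tits distance between these two classes equals $|(B_0)_P - (B'_0)_P|$, so summing over $P$ yields the divisor-valued distance $D = D(\Da_\Lambda, \Da_{\Lambda'}) = |B_0 - B'_0|$, which is the level of $\Ea$ by the definition recalled in \S2.

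For conditions (1) and (2), the same formula gives $\Da_\Lambda = \Da_{B_0}$, and by hypothesis $\Da_{B_0} \cong \Da_B$. Realizing this isomorphism as conjugation by some $g \in \mathrm{GL}_2(K)$ via Skolem--Noether and invoking the recalled equivalence $\Da_L = \Da_M \iff L = \mathfrak{L}M$ for an invertible bundle $\mathfrak{L}$, one obtains $\mathfrak{L}^{B_1} \oplus \mathfrak{L}^{C_1} \cong \mathfrak{L}^A \oplus \mathfrak{L}^{A+B}$ as abstract $\oink_X$-bundles for some divisor $A$. Krull--Schmidt for direct sums of line bundles then forces the multiset equality $\{\mathfrak{L}^{B_1}, \mathfrak{L}^{C_1}\} = \{\mathfrak{L}^A, \mathfrak{L}^{A+B}\}$, so $B_0 = C_1 - B_1$ is linearly equivalent to $B$ or to $-B$. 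The identical argument applied to $\Lambda'$ and $B'$ yields (2).

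I expect the main subtlety to be the sign ambiguity in this last step, which traces back to the Weyl-type element $\sbmattrix{0}{1}{1}{0}$ interchanging the two off-diagonal entries of the standard model $\Da_E$. This symmetry identifies $\Da_E$ with $\Da_{-E}$ as maximal orders, and is precisely what forces the conclusion to allow linear equivalence with either $B$ or $-B$, rather than with $B$ alone.
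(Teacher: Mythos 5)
Your proof is correct and follows essentially the same route as the paper's: both reduce via conjugation to the standard idempotent $\sbmattrix 1000$, observe that the two maximal orders defining $\Ea$ are then simultaneously diagonalized and hence of the form $\Da_{B_0}$ and $\Da_{B'_0}$, and read off the level as $|B_0-B'_0|$ from the local distances. The only divergence is in items (1) and (2), where the paper simply cites \cite[Prop.~4.1]{cqqgvro} for the fact that $\Da_{B_0}\cong\Da_B$ forces $B_0$ to be linearly equivalent to $\pm B$, while you reprove this via Skolem--Noether together with Krull--Schmidt for sums of line bundles; this is a legitimate self-contained substitute for that citation.
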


\begin{proof}
Recall that the set of local maximal orders containing a given idempotent, say $\sbmattrix 1000$, lie
in a maximal path of the corresponding local tree \cite[Cor. 4.3]{Eichler2}. 
Globally, the set of such orders coinciding with $\mathbb{M}_2(\oink_k)$
 outside some finite set $S$ of places
is in correspondence with an infinite grid whose dimension is the cardinality of $S$. Algebraically, they can be described
as the orders of the form $\Da_B=\sbmattrix {\oink_X}{\mathfrak{L}^B}{\mathfrak{L}^{-B}}{\oink_X}$ where 
$B$ is a divisor supported in $S$. It follows that the Eichler orders containing $\sbmattrix 1000$ 
as a global section are the orders 
of the form $\Ea[B,B']=\sbmattrix {\oink_X}{\mathfrak{L}^{-B'}}{\mathfrak{L}^{-B}}{\oink_X}$, where 
$B+B'$ is an effective divisor. In fact, if we define $$G=\sum_P\mathrm{min}\{\alpha_P,\alpha'_P\}P, \qquad
M=\sum_P\mathrm{max}\{\alpha_P,\alpha'_P\}P,$$
where $B=\sum_P\alpha_PP$ and $B'=\sum_P\alpha'_PP$ ,
we have $\Da_B\cap\Da_{B'}=\Ea[M,-G]$, which is an Eichler order of level $M-G$. We
 note that all pairs $(v'',v''')$ of opposite corners of the grid corresponding to $\Ea[M,-G]$
satisfy $\Da_{v''}=\Da_{B''}$ and $\Da_{v'''}=\Da_{B'''}$, where $|B''-B'''|=M-G$. Now the result follows from
\cite[Prop 4.1]{cqqgvro} and the discussion preceding it.
\end{proof}

\begin{lemma}\label{l34}
Let $P\in |\mathbb{P}^1|$ be a point of degree 2 or larger. Then there exists non-split orders in 
$\mathbb{O}_{P}$.
\end{lemma}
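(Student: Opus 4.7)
The plan is to construct an explicit Eichler order of level $P$ whose ring of global sections embeds into a field, and hence contains no non-trivial idempotent. By the idempotent criterion for splitness recalled in \S1, this will directly show the order is non-split.

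First I would set $\Da = \matrici_2(\oink_X)$, whose global sections are the ring of constant matrices $\Da(X) = \matrici_2(\finitum)$. The second maximal order $\Da'$ will agree with $\Da$ away from $P$, and will be chosen so that $\Da'_P$ is a neighbor of $\Da_P$ in the local Bruhat--Tits tree at $P$; such neighbors correspond bijectively to lines in $k_P^2$, where $k_P \cong \finitum_{q^d}$ is the residue field at $P$ and $d = \deg P$. The key choice is to pick a line $\ell \subseteq k_P^2$ that is \emph{not} $\finitum$-rational, which is possible precisely because $d \geq 2$, since then $|\mathbb{P}^1(k_P)| = q^d + 1 > q + 1 = |\mathbb{P}^1(\finitum)|$. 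Setting $\Da'_P = \mathrm{End}_{\oink_P}(L_\ell)$, where $L_\ell \subseteq \oink_P^2$ is the lattice reducing to $\ell$ modulo $\pi_P$, and $\Da'_Q = \matrici_2(\oink_Q)$ for every $Q \neq P$, the local--global assembly of \S2 yields a global maximal $X$-order $\Da'$, and the intersection $\Ea := \Da \cap \Da'$ is then an Eichler order of level exactly $P$.

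Next I would compute the global sections of $\Ea$. Any $A \in \Ea(X)$ must lie in $\Da(X) = \matrici_2(\finitum)$, and the only additional condition, imposed at $P$, is that its reduction $\bar A \in \matrici_2(k_P)$ preserve the line $\ell$. Hence
$$\Ea(X) = \{\,A \in \matrici_2(\finitum) : \bar A(\ell) \subseteq \ell\,\}.$$
Since $\ell$ is one-dimensional over $k_P$, the restriction $\bar A|_\ell$ is multiplication by a unique scalar $\chi(A) \in k_P$, and the assignment $A \mapsto \chi(A)$ is straightforwardly a ring homomorphism $\chi : \Ea(X) \to k_P$.

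The main obstacle, and the only non-routine step, is to verify that $\chi$ is injective. Here I would exploit the fact that $\bar A$ comes from an $\finitum$-matrix: its kernel is $(\ker A) \otimes_\finitum k_P$, which is either $\{0\}$, a line defined over $\finitum$, or all of $k_P^2$. If $\chi(A) = 0$ then $\ell \subseteq \ker \bar A$; since $\ell$ is non-zero and not $\finitum$-rational, the only compatible possibility is $\ker \bar A = k_P^2$, forcing $A = 0$. Injectivity of $\chi$ then embeds $\Ea(X)$ into the field $k_P$, making $\Ea(X)$ an integral domain whose only idempotents are $0$ and $1$. Consequently $\Ea$ is non-split, proving the lemma.
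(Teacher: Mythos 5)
Your proof is correct, and it takes a genuinely different route from the paper's. The paper argues by contradiction through its classifying-graph machinery: it invokes the classification of conjugacy classes of maximal orders on $\mathbb{P}^1$ as the $[\Da_{nP_1}]$, the structure of $\mathfrak{c}_P(\mathbb{O}_0)$ from \cite{cqqgvro} (ray structure and number of components), and the preceding lemma stating that a split Eichler order arising as $\Da_B\cap\Da_{B'}$ forces divisors $B_0\sim\pm B$, $B'_0\sim\pm B'$ with $|B_0-B'_0|=D$; a degree count then rules out splitness for a suitable edge, with a separate case analysis for $\deg(P)=2$ relying on a figure from earlier work. You instead exhibit one explicit non-split order: take $\Da=\matrici_2(\oink_X)$, pick a non-$\finitum$-rational line $\ell$ in the residue plane at $P$ (possible exactly when $\deg P\geq 2$), and observe that the global sections of $\Da\cap\Da'$ embed via $A\mapsto \bar A|_\ell$ into the residue field $k_P$, so they contain no non-trivial idempotent; the injectivity step, using that $\ker\bar A$ is defined over $\finitum$ while $\ell$ is not, is the right argument and is complete. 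Your approach is more elementary and self-contained, avoids the case split at $d=2$, and in fact proves the statement for an arbitrary smooth projective curve $X$, not just $\mathbb{P}^1$, since $\Da(X)=\matrici_2(\finitum)$ holds in general. What it does not give, and what the paper's argument is really after, is the location of the non-split classes inside the classifying graph (which edges of $\mathfrak{c}_P(\mathbb{O}_0)$ they correspond to); that finer information is what gets iterated in the proofs of Theorems \ref{t1} and \ref{t2}.
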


\begin{proof}
Let $K=K(\mathbb{P}^1)=\mathbb{F}(t)$.
The conjugacy classes of maximal orders in $\mathbb{M}_2(K)$ are the classes $[\Da_{nP_1}]$ for $n=0,1,2,\dots$
and $P_1$ a place of degree 1 \cite[\S II.2.3]{trees}. Recall that $P$ is, as a divisor, linearly equivalent to $dP_1$ where 
$d=\mathop{\mathrm{deg}}(P)$.
We need to recall some properties of the classifying graph $\mathfrak{c}_P(\mathbb{O}_0)$ of maximal orders:
\begin{enumerate}
\item For any $n>0$, the vertices $[\Da_{nP_1}],[\Da_{(n+d)P_1}],[\Da_{(n+2d)P_1}],\dots$ 
are consecutive vertices in an infinite ray (c.f. \cite[Th. 1.2]{cqqgvro}).
\item The graph has one connected component if $d$ is odd and two if $d$ is even (c.f. \cite[Th. 1.3]{cqqgvro}).
\end{enumerate} 
When $d>2$, there must exists an edge connecting two orders isomorphic to $\Da_{nP_1}$ and $\Da_{mP_1}$,
where neither $n+m$ nor $n-m$ is divisible by $d$.  In particular, if the corresponding Eichler order were split, 
there should exist  two divisors $B_0$ and $B'_0$ of degrees $\pm n$ and $\pm m$
satisfying $|B_0-B'_0|=P$. This can only mean $B_0-B'_0=\pm P$, which is not possible by degree considerations,
 and the result follows from the preceding lemma. Assume now $d=2$.
\begin{figure}[h]
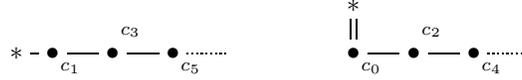

\[  
\xygraph{
!{<0cm,0cm>;<.8cm,0cm>:<0cm,.8cm>::}
!{(0.4,1)}*+{*}="a" 
!{(1,1)}*+{\bullet}="b" !{(1.3,0.7)}*+{{}^{c_1}}="b1"
!{(2,1)}*+{\bullet}="c" !{(2.3,1.3)}*+{{}^{c_3}}="c1"
!{(3,1)}*+{\bullet}="x" !{(3.3,0.7)}*+{{}^{c_5}}="x1"
!{(4,1)}*+{}="d"
!{(6,1.8)}*+{*}="m"
!{(6,1)}*+{\bullet}="e" !{(6.3,0.7)}*+{{}^{c_0}}="e1"
!{(7,1)}*+{\bullet}="f" !{(7.3,1.3)}*+{{}^{c_2}}="f1"
!{(8,1)}*+{\bullet}="g" !{(8.3,0.7)}*+{{}^{c_4}}="g1"
!{(9,1)}*+{}="h"
 "a"-"b" "b"-"c" "c"-"x" "x"-@{.}"d"
 "e"-"f" "f"-"g"
"g"-@{.}"h" "m"-@{=}"e"} 
\]\caption{The two connected components of $\mathfrak{c}_P(\mathbb{O}_0)$ when  
$X=\mathbb{P}^1$ and $\deg(P)=2$. Here $c_m=[\Da_{mP_1}]$.}
\end{figure}
We learn from \cite[Fig. 7]{cqqgvro} that there is an edge in 
$\mathfrak{c}_P(\mathbb{O}_0)$ connecting the class
$[\Da_0]$ to itself, i.e., there is an edge in $\mathfrak{t}(K_P)$ connecting two orders
isomorphic to $\Da_0$. We denoted the corresponding half-edge with double line in Figure 1. 
If the corresponding Eichler order were split, there should exist 
two divisors $B_0$ and $B'_0$ of degree $0$
satisfying $|B_0-B'_0|=P$, and the result follows as before. 
\end{proof}

\begin{remark}
Note that the same argument fails for the half-edge in the other component as
$B_0=P_1$ and $B'_0=P_1-P$ are divisors of degree $\pm1$ satisfying $B_0-B_0'=P$,
and in fact the corresponding Eichler order is split. 
\end{remark}

\begin{remark}\label{r35}
Let $U=X-\{P\}$, let $\Gamma_0=\Gamma_0(\Da)=K^*\Da(U)^*/K^*\subseteq\Gamma$ be the unit group
of $\Da$, and let 
$\mathfrak{s}_P(\Da)=\Gamma_0\backslash\mathfrak{t}(K_P)$ 
be the S-graph of $\Da$ as defined in \cite{cqqgvro}.
 Since $\Gamma_0$ is a normal subgroup of
$\Gamma$, the classifying graph is a quotient of the S-graph,
in the sense that each connented component of the former is a quotient of the latter.
 This can be used as a tool to compute classifying 
graphs, since the valency in the S-graph is easier to compute.
In fact, for any order $\Da'\in\Psi$, the group $\Da'(X)^*$ acts on the 
$\mathbb{F}(P)$-vector space $\Lambda_P/\pi_P\Lambda_P$, where $\Lambda_P$
is the lattice corresponding to the maximal order $\Da'_P$, and $\pi_P$ is a local uniformizing parameter.
This can be interpreted as an action via Moebius transformations on
 the finite projective space $\mathbb{P}^1\big(\mathbb{F}(P)\big)$.
 We identify these orbits with the $P$-neighbors 
of $\Da$. This has a particularly simple description for a split Eichler order $\Da=\Ea[B,B']$:
\begin{quote}
Assume that $B+B'$ is effective and non-zero. Then either $B$ or $B'$ has positive degree, say $B$ to fix ideas.
Then $\mathfrak{L}^{-B}(X)=\{0\}$. A  simple computation shows that
\begin{equation}\label{eqn33}
\Ea[B,B'](X)=\bmattrix {\finitum}{\mathfrak{L}^{-B'}(X)}{0}{\finitum},
\end{equation}
 and any element whose only
eigenvalue is $1$ acts by conjugation as an aditive map of the form $t\mapsto t+a$ on the projective line
$\mathbb{P}^1\big(\finitum(P)\big)$.\end{quote}
 We conclude that any vertex in the
S-graph $\mathfrak{s}_P(\Da)$ corresponding to an order satisfying Equation 
(\ref{eqn33}) has valency $2$ as soon as
$$\dim_{\finitum}\Big(\mathfrak{L}^{-B'}(X)/\mathfrak{L}^{-B'-P}(X)\Big)=[\finitum(P):\finitum]=\deg(P),$$
while its valency is $2+|\finitum(P)^*/\finitum^*|$ if the preceding dimension is $0$.
 It is a consequence of Riemann-Roch' 
Theorem that the valency is always $2$ for large values of $\deg(-B')$.
 In particular, if $P$ is
a point of degree $1$, the valency of a non-maximal split Eichler order can be either $2$ or $3$.
A similar result holds for maximal orders by a slightly refined argument.
\end{remark}

\section{Proof of Theorem \ref{t1} and Theorem \ref{t2} }

We begin this section by proving a few key lemmas. We use throughout the following formulas
\begin{equation}\label{switchends}
\bmattrix 01f0\Ea[B,D]{\bmattrix 01f0}^{-1}=\Ea[D+\mathrm{div}(f),B-\mathrm{div}(f)]
\end{equation}
and
\begin{equation}\label{noswitchends}
\bmattrix f001\Ea[B,D]{\bmattrix f001}^{-1}=\Ea[B-\mathrm{div}(f),D+\mathrm{div}(f)],
\end{equation}
which are proved by a straightforward computation.

\begin{lemma}\label{l41}
Let $P_1,P_2,P_3\in |\mathbb{P}^1|$ be three points of degree 1. Then every order in  $\mathbb{O}_{P_1+P_2}$
is split, but there exists a unique conjugacy class of non-split orders in $\mathbb{O}_{P_1+P_2+P_3}$.
\end{lemma}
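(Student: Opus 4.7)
The plan is to analyze the grid $\mathbb{S}(\Ea)$ of maximal orders containing $\Ea$, exploiting two features of $\mathbb{P}^1$: Theorem~GB forces every corner of the grid to have the form $\Da_B$ for some divisor $B$, and the classifying graph $\mathfrak{c}_P(\mathbb{O}_0)$ at a degree-one place $P$ is the infinite ray $[\Da_0]-[\Da_{P_1}]-[\Da_{2P_1}]-\cdots$ with no extra edges (by the description recalled in the proof of Lemma~\ref{l34} for $d=1$, combined with the valency computation of Remark~\ref{r35} to rule out loops or multi-edges). Consequently, corners joined by a $P$-edge have conjugacy-class labels differing by exactly one.

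For the first assertion, the grid $\mathbb{S}(\Ea)$ of an Eichler order of level $P_1+P_2$ is a $2\times 2$ square. The ray-adjacency constraints at $P_1$ and $P_2$, together with a short case check, pin the multiset of corner-labels to $\{k,k-1,k-1,k-2\}$ for some integer $k\geq 1$ (with $k=1$ collapsing to $\{1,0,0,1\}$). This is exactly the corner pattern of the split Eichler order $\Ea[kP_1,(1-k)P_1+P_2]$ from the proof of the preceding lemma. After conjugating one corner into standard split form $\Da_B$ via Theorem~GB, the local simultaneous-diagonalization of the opposite corner lattices at $P_1$ and $P_2$ extends globally (because the gluing data lives in line bundles of negative degree on $\mathbb{P}^1$, whose $H^1$ vanishes), identifying $\Ea$ with $\Ea[kP_1,(1-k)P_1+P_2]$ and thus proving it split.

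For the second assertion, the grid is a $3$-cube and the corner-label function $f\colon\{0,1\}^3\to\mathbb{Z}_{\geq 0}$ is $\pm 1$-Lipschitz along edges. Applying Part~(1) to each of the six $2$-dimensional faces forces the restriction of $f$ to every face to be the path pattern of the first part. Up to cube-symmetries induced by conjugation, this yields essentially two Lipschitz height functions: the monotone $f(x)=|x|$, realized by the split representative $\Ea[kP_1,(1-k)P_1+P_2+P_3]$ whose main cube-diagonal has degree pair $\{k,k-3\}$, satisfying the preceding lemma's condition; and the alternating $f(x)\equiv|x|\pmod 2$, for which every one of the four cube-diagonals has degree pair of absolute difference exactly one. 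In the alternating case, the preceding lemma forbids any $\Ea[B_0,B_0']$ realization, so the corresponding order is non-split. I would then construct an explicit non-split example (as the intersection of $\Da_0$ with a suitable $\mathrm{PGL}_2(K)$-translate differing from $\Da_0$ at exactly $P_1,P_2,P_3$), and use \eqref{switchends}--\eqref{noswitchends} together with the $\mathrm{PGL}_2(\mathbb{F})$-action on degree-one points of $\mathbb{P}^1$ to verify that any two realizations of the alternating configuration are conjugate.

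The main obstacle I anticipate is the uniqueness in the second assertion: all concrete $3$-cubes realizing the alternating height function must lie in a single $\mathrm{PGL}_2(K)$-orbit. I would reduce to a normal form using \eqref{switchends}--\eqref{noswitchends} to fix one corner to $\Da_0$, after which the residual freedom is controlled by the action of $\mathrm{PGL}_2(\mathbb{F})$ on the triple $(P_1,P_2,P_3)$ of distinct degree-one points, together with a parameter count on the position of the opposite corner.
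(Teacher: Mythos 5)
Your strategy of reading everything off from the conjugacy classes of the corners of the grid $\mathbb{S}(\Ea)$ runs into an obstacle that the paper flags explicitly right after Proposition 3.2: knowing the conjugacy class of each vertex of a grid does \emph{not} determine the conjugacy class of the corresponding Eichler order (this is precisely why classifying graphs of maximal orders have multiple edges). So in your first part, the step from ``the corner labels are $\{k,k-1,k-1,k-2\}$'' to ``$\Ea$ is conjugate to $\Ea[kP_1,(1-k)P_1+P_2]$'' is exactly the gap that must be closed, and your proposed patch does not close it: $H^1(\mathbb{P}^1,\mathcal{O}(d))$ vanishes only for $d\geq -1$, so ``gluing data in line bundles of negative degree'' is not automatically trivial, and in any case this cohomological step is never actually set up. The paper closes the gap differently: conjugacy classes in $\mathbb{O}_{D+Q}$ are identified with \emph{edges} of the classifying graph $\mathfrak{c}_Q(\mathbb{O}_D)$, that graph is computed using the valency bounds of Remark \ref{r35} (every vertex of $\mathfrak{c}_{P_2}(\mathbb{O}_{P_1})$ has valency $2$), and one then checks that every edge already has a representative on the maximal path of orders containing the global idempotent $\sbmattrix 1000$. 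That orbit count is the real content, and your proposal has no substitute for it.

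The second assertion has two further problems. For non-splitness of the alternating cube, applying the lemma preceding Lemma \ref{l34} to a single main diagonal does not give a contradiction: with corner classes $[\Da_0]$ and $[\Da_{P_1}]$ you would need $B_0-B_0'=\pm P_1\pm P_2\pm P_3$ of degree $\mp 1$, and $\epsilon_1+\epsilon_2+\epsilon_3=\pm1$ is perfectly achievable, so no degree obstruction arises. The paper instead uses the whole cube: normalizing the idempotent to $\sbmattrix 1000$ and one corner to $\Da_0$, the three neighbours are forced to be $\Da_{\pm P_i}$, and the requirement that the three distance-two corners lie in $[\Da_0]$ gives $\epsilon_1=-\epsilon_2$, $\epsilon_1=-\epsilon_3$, $\epsilon_2=-\epsilon_3$, which is inconsistent. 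Finally, you correctly identify uniqueness as the main obstacle, but the normal-form and parameter count you sketch are not carried out, and this is where the hard work lies: in the paper the vertex $[\Ea[P_1,P_2]]$ has valency $3$ in the S-graph at $P_3$, two of its three edges become identified in the classifying graph via \eqref{switchends} with $\mathrm{div}(f)=P_2-P_1$, and every vertex $d_n$ with $n\geq1$ has valency $2$ with non-isomorphic neighbours; this exhausts all classes in $\mathbb{O}_{P_1+P_2+P_3}$ and leaves exactly one half-edge, hence exactly one non-split class. Without such a count your argument shows at most that a non-split class exists, not that it is unique.
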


\begin{proof}
Recall as before that the local maximal orders containing a fixed non-trivial idempotent,
i.e., a conjugate of  $\sbmattrix 1000$, are the vertices of a maximal path \cite[Cor. 4.3]{Eichler2}.
 On the other hand, the 
classifying graph (or the S-graph) for maximal orders at a point $P_1$ of degree $1$ is as shown in Figure 2A
(c.f. \cite[\S II.2.3]{trees}, or \cite[Fig. 1]{cqqgvro}).  This is covered twice by the maximal path in Figure 2B.
Every edge of this graph corresponds to a conjugacy class of orders in $\mathbb{O}_{P_1}$ and
conversely, whence every order in this genus is split. In fact, all classes in this genus are represented in the
set $$\{\Ea[P_1,0],\Ea[2P_1,-P_1],\Ea[3P_1,-2P_1],\dots\}.$$
Now we draw the classifying graph for $\mathbb{O}_{P_1}$ at a place $P_2\neq P_1$.
It is easy to see that all vertices have valency 2 by the remark at the end of \S3.
Thus we obtain the graph in Figure 2C, where $b_n$ is the class $\big[\Ea[P_1+nP_2,-nP_2]\big]$, which
equals $\big[\Ea[(n+1)P_1,-nP_1]\big]$ by Equation (\ref{noswitchends}).
 The edges in this graph correspond to the classes in  
$\mathbb{O}_{P_1+P_2}$. Again, each of these classes of edges has a representative in
the maximal path corresponding to the global idempotent  $\sbmattrix 1000$. We conclude that each order in this
genus is split. Representatives for all these orders are in the set
$$\{\Ea[P_1,P_2],\Ea[P_1+P_2,0],\Ea[P_1+2P_2,-P_2],\Ea[P_1+3P_2,-2P_2],\dots\}.$$
The first one of these representatives corresponds to the half-edge in Figure 2C. Note that 
each conjugacy classes above can be fully characterized by the conjugacy classes of 
the four maximal orders containing any order in it. For example, the maximal orders containing
the order $\Ea[P_1,P_2]$ have the form $\Da_B$ where $B\leq P_1$ and $-B\leq P_2$,
so $B\in\{ 0,P_1,-P_2,P_1-P_2\}$, and they belong to the classes 
$[\Da_0]$, $[\Da_{P_1}]$, $[\Da_{P_1}]$, and $[\Da_0]$  respectively.

We can iterate this procedure on the classifying graph for $\mathbb{O}_{P_1+P_2}$ (Figure 2D) at a third place
$P_3$, where $d_n=\big[\Ea[P_1+nP_3,P_2-nP_3]\big]=\big[\Ea[nP_2+P_1,(1-n)P_2]\big]$.
\begin{figure}[h]
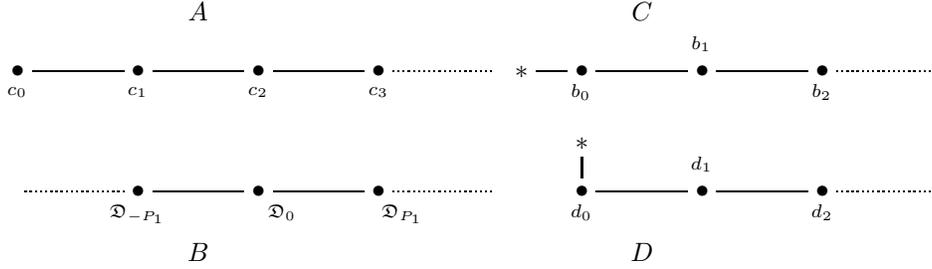

\[  \xygraph{
!{<0cm,0cm>;<.8cm,0cm>:<0cm,.8cm>::}
!{(0,4)}*+{\bullet}="a" !{(0,3.6)}*+{{}^{c_0}}="a1"
!{(2,4)}*+{\bullet}="b" !{(2,3.6)}*+{{}^{c_1}}="b1"
!{(4,4)}*+{\bullet}="c" !{(4,3.6)}*+{{}^{c_2}}="c1"
!{(6,4)}*+{\bullet}="x" !{(6,3.6)}*+{{}^{c_3}}="x1"
!{(8,4)}*+{}="d"
!{(0,2)}*+{}="m"
!{(2,2)}*+{\bullet}="e" !{(2,1.6)}*+{{}^{\Da_{-P_1}}}="e1"
!{(4,2)}*+{\bullet}="f" !{(4.4,1.6)}*+{{}^{\Da_0}}="f1"
!{(6,2)}*+{\bullet}="g" !{(6.4,1.6)}*+{{}^{\Da_{P_1}}}="g1"
!{(8,2)}*+{}="h"
!{(3,5)}*+{A}="z" !{(3,1)}*+{B}="z1"
 "a"-"b" "b"-"c" "c"-"x" "x"-@{.}"d"
 "e"-"f" "f"-"g"
"g"-@{.}"h" "m"-@{.}"e"} 
\xygraph{
!{<0cm,0cm>;<.8cm,0cm>:<0cm,.8cm>::}
!{(1,4)}*+{*}="a" 
!{(2,4)}*+{\bullet}="b" !{(2,3.6)}*+{{}^{b_0}}="b1"
!{(4,4)}*+{\bullet}="c" !{(4,4.4)}*+{{}^{b_1}}="c1"
!{(6,4)}*+{\bullet}="x" !{(6,3.6)}*+{{}^{b_2}}="x1"
!{(8,4)}*+{}="d"
!{(2,2.8)}*+{*}="m"
!{(2,2)}*+{\bullet}="e" !{(2,1.6)}*+{{}^{d_0}}="e1"
!{(4,2)}*+{\bullet}="f" !{(4,2.4)}*+{{}^{d_1}}="f1"
!{(6,2)}*+{\bullet}="g" !{(6,1.6)}*+{{}^{d_2}}="g1"
!{(8,2)}*+{}="h"
!{(3,5)}*+{C}="z" !{(3,1)}*+{D}="z1"
 "a"-"b" "b"-"c" "c"-"x" "x"-@{.}"d"
 "e"-"f" "f"-"g"
"g"-@{.}"h" "m"-"e"} 
\]\caption{Four graphs used in the proof of Lemma 3.4.}\end{figure}
If we try to use this graph to prove that all edges correspond to split orders we find an obstacle.
The image of the vertex $v=\Ea[P_1,P_2]$ has valency $3$ in the S-graph
(c.f. Remark \ref{r35}). Two of its edges $e'$ and $e''$ join it with the images of 
$\Ea[P_1-P_3,P_2+P_3]$ and $\Ea[P_1+P_3,P_2-P_3]$ respectively.
Both latter orders are isomorphic to $\Ea[P_1+P_2,0]$, and we can check that
the images of $e'$ and $e''$ in the classifying graph coincide, as we see by setting 
$\mathrm{div}(f)=P_2-P_1$ in Equation (\ref{switchends}).
Any other edge $e$ whose starting point is $\Ea[P_1,P_2]$ is in the class corresponding
to the third edge in the S-graph. Since every premage, in the S-graph, of the vertex $d_n$, for $n\geq1$,
has valency 2 with non-isomorphic neighbors in the classes $d_{n-1}$ and $d_{n+1}$,
the edge $e$ necesarily joins two orders isomorphic to $\Ea[P_1,P_2]$. We conclude that the classifying 
graph looks like the one in Figure 2D. The vertical half-edge joining $d_0$ with a nonvertex 
has no representative on the main maximal path, but 
it might have a representative in the maximal path corresponding to a different global idempotent.
 We must prove that this is not the case.
Assume that the Eichler order $\Ea$ corresponding to this edge has an idempotent global section $\rho$.
We observe that both $P_3$-faces of the corresponding grid correspond to  
conjugates of the order $\Ea[P_1,P_2]$, whence the maximal orders corresponding to each of the
eight vertices belongs to the class shown in Figure 3.
 Assume a basis is chosen in a way that $\rho=\sbmattrix 1000$. 
Conjugating by a suitable diagonal matrix if needed, we can assume that one of the vertices 
in the class $[\Da_0]$ is actually $\Da_0$. Then, no choice of
the signs in the neighboring vertices, which must be $\Da_{P_i}$ or $\Da_{-P_i}$ in each case, give us
the configuration of classes shown in Figure 3. This is a contradiction.   
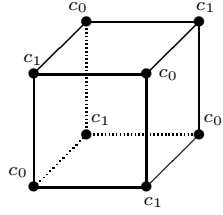
\begin{figure}
\unitlength 1mm % = 2.845pt
\linethickness{0.4pt}
\ifx\plotpoint\undefined\newsavebox{\plotpoint}\fi % GNUPLOT compatibility
\[
\begin{picture}(28,25)(0,0)
\put(0,0){\line(0,1){15}}\put(0,0){\line(1,0){15}}
\put(15,0){\line(0,1){15}}\put(0,15){\line(1,0){15}}
\put(0,15){\line(1,1)7}\put(15,15){\line(1,1)7}
\put(7,22){\line(1,0){15}}\put(22,7){\line(0,1){15}}
\put(0,15){\line(1,1)7}\put(15,15){\line(1,1)7}\put(15,0){\line(1,1)7}
%\multiput(15,15)(-0.1,0.0875){80}{\line(0,-1){.05}}
%\multiput(15,0)(-0.5,0.4375){16}{\line(0,-1){.05}}
\multiput(0,0)(0.4375,0.4375){16}{\line(0,-1){.05}}
\multiput(7,7)(0,0.5){30}{\line(0,1){.05}}
\multiput(7,7)(0.5,0){30}{\line(1,0){.05}}
\put(0,0){\makebox(0,0)[cc]{$\bullet$}}\put(7,7){\makebox(0,0)[cc]{$\bullet$}}
\put(22,7){\makebox(0,0)[cc]{$\bullet$}}\put(15,0){\makebox(0,0)[cc]{$\bullet$}}
\put(0,15){\makebox(0,0)[cc]{$\bullet$}}\put(7,22){\makebox(0,0)[cc]{$\bullet$}}
\put(22,22){\makebox(0,0)[cc]{$\bullet$}}\put(15,15){\makebox(0,0)[cc]{$\bullet$}}
\put(-2,2){\makebox(0,0)[cc]{${}_{c_0}$}}\put(9,9){\makebox(0,0)[cc]{${}_{c_1}$}}
\put(24,9){\makebox(0,0)[cc]{${}_{c_0}$}}\put(16,-2){\makebox(0,0)[cc]{${}_{c_1}$}}
\put(0,17){\makebox(0,0)[cc]{${}_{c_1}$}}\put(6,24){\makebox(0,0)[cc]{${}_{c_0}$}}
\put(23,24){\makebox(0,0)[cc]{${}_{c_1}$}}\put(18,15){\makebox(0,0)[cc]{${}_{c_0}$}}
\end{picture}
\] \caption{Conjugacy classes $c_n=[\Da_{nP_1}]$
of the maximal orders containing the only non-split Eichler order
in the genus $\mathbb{O}_{B}$, up to conjugacy,
 when $B$ is the sum of three different points of degree 1.}
\end{figure}
\end{proof}

By a cusp, in a graph $\mathfrak{g}$, we mean the image of a ray 
$\gamma:\mathfrak{i}_{0,\infty}\rightarrow\mathfrak{g}$, where $\gamma_V(n_i)$
has valency $2$ for $i\geq1$.
A graph is combinatorially finite if it is the union of a finite graph and a finite number of cusps. Serre
proved in \cite{trees} that the S-graph of a maximal order is combinatorially finite. 
We usually assume that cusps are as big as possible by choosing the valency of
$\gamma_V(n_0)$ different from $2$, whenever possible. This is not the case if
$\mathfrak{g}$ looks like the classifying graph in Figure 2C, where we assume the
initial vertex of the cusp is $\gamma_V(n_0)=b_0$, or when $\mathfrak{g}$ is a maximal path.
In the latter case we choose an arbitrary point as the initial vertex of either cusp.

\begin{example}\label{ex42}
Note that the proceadure applied above to compute the quotient graphs in the preceding proof
can be iterated to describe the classifying graph at $P_\infty$
for every genus of the form $\mathbb{O}_{P_1+\dots+P_n}$
where $P_1,\dots,P_n$ and $P_\infty$ are points of degree 1.
Note that $n\leq|\mathbb{F}|$.  In every step, almost all edges in the cusp
of the previous step become vertices in the new cusp that can be shown to be unique. In fact,
applying equation (\ref{noswitchends}) with $\mathrm{div}(f)=n(P'-P)$ send the edge $e_n$
 in Figure 4 to the edge $f_n$. The square between $f_n$ and $f_{n+1}$
corresponds to an edge in the next step. This proceadure shows that the classifying
graph $C_{P_\infty}(\mathbb{O}_{P_1+\dots+P_n})$ has precisely one cusp.
  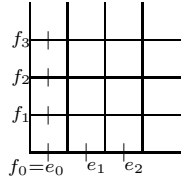
\begin{figure}
\unitlength 1mm % = 2.845pt
\linethickness{0.4pt}
\ifx\plotpoint\undefined\newsavebox{\plotpoint}\fi % GNUPLOT compatibility
\[
\begin{picture}(28,25)(0,0)
\put(0,0){\line(0,1){20}}\put(0,0){\line(1,0){20}}
\put(5,0){\line(0,1){20}}\put(0,5){\line(1,0){20}}
\put(10,0){\line(0,1){20}}\put(0,10){\line(1,0){20}}
\put(15,0){\line(0,1){20}}\put(0,15){\line(1,0){20}}
\put(2.5,0){\makebox(0,0)[cc]{{\scriptsize$|$}}}\put(7.5,0){\makebox(0,0)[cc]{{\scriptsize$|$}}}
\put(12.5,0){\makebox(0,0)[cc]{{\scriptsize$|$}}}\put(2.5,5){\makebox(0,0)[cc]{{\scriptsize$|$}}}
\put(2.5,10){\makebox(0,0)[cc]{{\scriptsize$|$}}}\put(2.5,15){\makebox(0,0)[cc]{{\scriptsize$|$}}}
\put(1,-2){\makebox(0,0)[cc]{${}_{f_0=e_0}$}}
\put(9,-2){\makebox(0,0)[cc]{${}_{e_1}$}}
\put(14,-2){\makebox(0,0)[cc]{${}_{e_2}$}}
\put(-1,5){\makebox(0,0)[cc]{${}_{f_1}$}}
\put(-1,10){\makebox(0,0)[cc]{${}_{f_2}$}}
\put(-1,15){\makebox(0,0)[cc]{${}_{f_3}$}}
\end{picture}
\] \caption{ Horizontal neighbors are $P$-neighbors,
while vertical neighbors are $P'$-neighbors. Relevant edges are marked "$|$".}
\end{figure}
\end{example}

\begin{lemma}\label{l42}
Let $X$ be an arbitrary smooth projective curve, and
let $P\in |X|$ be an arbitrary point. Then there is an infinite
set of conjugacy classes of non-split orders in  $\mathbb{O}_{2P}$.
\end{lemma}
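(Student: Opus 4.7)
My plan is to produce an explicit infinite family of pairwise non-conjugate, non-split Eichler orders of level $2P$ by varying the middle maximal order. Fix a closed point $Q \in |X|$ with $Q \neq P$; such a point exists since $X$ has infinitely many closed points. For each integer $n \geq 1$ consider the split maximal order $\Da_n := \Da_{nQ}$. The isomorphism classes $[\Da_n]$ are pairwise distinct, even modulo $B \mapsto -B$, because their degrees $n\deg(Q)$ grow without bound.

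For $n$ past the Riemann--Roch threshold (explicitly, $\deg(nQ) > 2g - 2 + \deg(P)$) one has $\mathfrak{L}^{-nQ}(X) = 0$ and the reduction $\mathfrak{L}^{nQ}(X) \twoheadrightarrow \mathfrak{L}^{nQ}(X)/\mathfrak{L}^{nQ - P}(X) \hookrightarrow \mathbb{F}(P)$ is surjective. The global unit group $\Da_n(X)^*$ is then the upper-triangular group $\sbmattrix{\mathbb{F}^*}{\mathfrak{L}^{nQ}(X)}{0}{\mathbb{F}^*}$, and its induced action on the $|\mathbb{F}(P)|+1$ $P$-neighbors of $\Da_n$ in $\mathfrak{t}(K_P)$---identified with $\mathbb{P}^1(\mathbb{F}(P))$ via the residual lattice---fixes the point $[1{:}0]$ and acts transitively on $\mathbb{A}^1(\mathbb{F}(P))$ by affine transformations $x \mapsto (a/d)x + (b/d)$. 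Picking any two distinct neighbors $\Da_1, \Da_2$ in this larger orbit yields globally isomorphic maximal orders (they lie in a single $\Da_n(X)^*$-orbit), both of the form $\Da_C$ for a common divisor $C = nQ + \varepsilon P$ with $\varepsilon \in \{\pm 1\}$. Set $\Ea_n := \Da_1 \cap \Da_2$; this is an Eichler order of level $2P$ whose unique middle maximal order is $\Da_n$.

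To see $\Ea_n$ is non-split, I invoke the split characterization lemma preceding Lemma~\ref{l34}: if $\Ea_n$ were split, it would produce divisors $C_1, C_2$ each linearly equivalent to $\pm C$ with $|C_1 - C_2| = 2P$. In $\mathrm{Pic}(X)$, the class $[C_1] - [C_2]$ lies in $\{0, \pm 2[C]\}$ and must equal $\pm 2[P]$, forcing $2[C] = \pm 2[P]$. Substituting $C = nQ + \varepsilon P$ and applying the degree map gives $2n\deg(Q) \in \{-4\deg(P),\, 0,\, 4\deg(P)\}$, which admits at most one positive integer solution for $n$. So $\Ea_n$ is non-split for all but at most one of the infinitely many large $n$, and because the middle maximal orders $\Da_n$ are pairwise non-conjugate, so are the $\Ea_n$, witnessing infinitely many conjugacy classes of non-split orders in $\mathbb{O}_{2P}$.

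The main hurdle is the two-orbit structure of the $P$-neighbors of $\Da_n$, which reduces to the Riemann--Roch-controlled surjectivity of global sections of $\mathfrak{L}^{nQ}$ onto the residue field $\mathbb{F}(P)$ and requires $n$ to be taken sufficiently large; once that is established, everything else is elementary degree bookkeeping against the split characterization lemma.
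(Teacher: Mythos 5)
Your proposal is correct, and it rests on the same underlying mechanism as the paper's proof: a ``tripod'' Eichler order of level $2P$ whose two outer maximal orders are isomorphic while its middle order is a split maximal order of large degree cannot be split, because the split-characterization lemma would force the outer orders to be $\Da_{C_1}$ and $\Da_{C_2}$ with $C_1-C_2=\pm 2P$ and $C_1,C_2\sim\pm C$, which is killed by comparing absolute values of degrees; and the infinitude of classes is read off from the pairwise non-isomorphic middle orders (an invariant, since conjugation preserves the centre of the length-$2$ path in the tree at $P$). Where you differ is in the family you construct: the paper walks up a cusp of the classifying graph $\mathfrak{c}_P(\Da)$ at $P$ itself, taking apex in $[\Da_{B+nP}]$ and both feet among the $|\mathbb{F}(P)|$ neighbours in $[\Da_{B+(n-1)P}]$, so the whole family consists of $P$-variants of a single order and the two-orbit structure is imported from the known shape of the cusps; you instead fix an auxiliary place $Q\neq P$, take apex $\Da_{nQ}$, and prove the two-orbit structure of its $P$-neighbours directly from Riemann--Roch (surjectivity of $\mathfrak{L}^{nQ}(X)\to\mathfrak{L}^{nQ}(X)/\mathfrak{L}^{nQ-P}(X)$ for large $n$), which makes both feet isomorphic to $\Da_{nQ-P}$. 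Your route is more self-contained (it does not presuppose the cusp description of the classifying graph, essentially re-deriving the relevant orbit count as in Remark \ref{r35}), at the cost of a genuinely superfluous auxiliary place and a Riemann--Roch threshold; the paper's route integrates with the graph machinery used throughout. Two points you should tighten, though neither is a gap: the two chosen neighbours are \emph{conjugate} to $\Da_C$ rather than literally of the form $\Da_C$ (which is all the lemma needs), and the claim that non-conjugacy of middle orders implies non-conjugacy of the $\Ea_n$ deserves the one-line remark that any isomorphism of Eichler orders of level $2P$ permutes the three containing maximal orders isometrically and hence fixes the middle one.
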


\begin{proof}
Fix an order $\Ea$ of level $2P$ and a maximal order $\Da$ containing $\Ea$. 
 Any cusp in the classifying graph $C_P(\Da)$ looks like the one in Figure 5A, where each
order in the class $[\Da_{B+nP}]$, for $n\geq1$, has one neighbor in the class $[\Da_{B+(n+1)P}]$ and
all the others in the class $[\Da_{B+(n-1)P}]$. Since the orders $\Ea'\in\mathbb{O}_{2P}$
satisfying $\Ea'(U)=\Da(U)$, where $U=X-\{P\}$, correspond to lines of length $2$
 in the Bruhat-Tits tree at $P$, for every
value of $n>1$ there exists an Eichler order contained in one order in the class $[\Da_{B+nP}]$ 
and two orders in the class $[\Da_{B+(n-1)P}]$. We claim that such orders are non-split for $n>-\mathrm{deg}(B)$.
As they are evidently in different conjugacy classes, the result follows from the claim.  Now let $\Ea$
be an Eichler order of level $2P$ whose grid has vertices in the conjugacy classes shown
 in Figure 5B. If $\Ea$ were split, by an appropiate 
choice of coordinates, we can assume $\Ea=\Ea[D,D']$, where $D+D'=2P$ or $-D'=D-2P$, whence
the three maximal orders containing $\Ea$ must be $\Da_D$, $\Da_{D-P}$ and $\Da_{D-2P}$,
with $D-P$ linearly equivalent to $B+nP$, and hence of positive degree. 
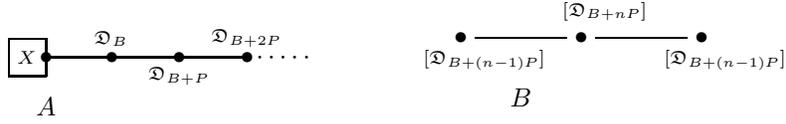
\begin{figure}[h]
\[  
\unitlength 1mm % = 2.845pt
\linethickness{0.4pt}
\ifx\plotpoint\undefined\newsavebox{\plotpoint}\fi % GNUPLOT compatibility
\begin{picture}(40,15)(0,-6)
\put(0,0){\line(0,1){5}}\put(0,5){\line(1,0){5}}\put(0,0){\line(1,0){5}}\put(5,0){\line(0,1){5}}
\put(5,2.5){\makebox(0,0)[cc]{$\bullet$}}\put(5.2,2.5){\line(1,0){8}}\put(13.7,2.5){\makebox(0,0)[cc]{$\bullet$}}
\put(14.2,2.5){\line(1,0){8}}\put(22.7,2.5){\makebox(0,0)[cc]{$\bullet$}}
\put(23.2,2.5){\line(1,0){8}}\put(31.7,2.5){\makebox(0,0)[cc]{$\bullet$}}
\put(36,2.5){\makebox(0,0)[cc]{$\cdots\cdots$}}
\put(5,-4){\makebox(0,0)[cc]{$A$}}\put(2.5,2.5){\makebox(0,0)[cc]{${}_X$}}
\put(13.7,5){\makebox(0,0)[cc]{${}_{\Da_B}$}}
\put(22.7,0){\makebox(0,0)[cc]{${}_{\Da_{B+P}}$}}
\put(31.7,5){\makebox(0,0)[cc]{${}_{\Da_{B+2P}}$}}
\end{picture}
\qquad\qquad
\xygraph{
!{<0cm,0cm>;<.8cm,0cm>:<0cm,.8cm>::}
!{(2,2)}*+{\bullet}="e" !{(2.4,1.6)}*+{{}^{[\Da_{B+(n-1)P}]}}="e1"
!{(4,2)}*+{\bullet}="f" !{(4.4,2.4)}*+{{}^{[\Da_{B+nP}]}}="f1"
!{(6,2)}*+{\bullet}="g" !{(6.4,1.6)}*+{{}^{[\Da_{B+(n-1)P}]}}="g1"
 !{(3,1)}*+{B}="z1"
 "e"-"f" "f"-"g"} 
\]\caption{Two graphs used in the proof of Lemma \ref{l42}. The square marked "X" denotes a possibly infinite subgraph.}\end{figure}
We conclude that the absolute value of the degrees of the divisors $D$ and $D-2P$ 
are different, so that the corresponding orders cannot be conjugate. The result follows.
\end{proof}

\begin{remark}
At the end of the preceding proof, we can also prove that $\Ea$ is not split by observing that 
$\Da_{B+(n+1)P}$, as a neighbor of $\Da_{B+nP}=\Da_\Lambda$, corresponds to a common eigenspace
$V\subseteq\Lambda_P/\pi_P\Lambda_P$ of every idempotent in the ring of global sections $\Da_{B+nP}(X)$, 
whence no such idempotent  has two eigenspaces in $\Lambda_P/\pi_P\Lambda_P$
corresponding to $P$-neighbors isomorphic to $\Da_{B+(n-1)P}$.
\end{remark}

\begin{example}\label{e44}
Let $P$ and $Q$ be points of degree 1 in the proyective line $\mathbb{P}^1$. Let $U=X\backslash\{P\}$.
Consider an order
$\Ea\in\mathbb{O}_{2P}$ and the classifying graph $\mathfrak{c}_Q(\Ea)$. 
The vertices of this graph, or equivalently
the conjugacy classes in $\mathbb{O}_{2P}$, are in correspondence with the 
orbits of lines of length $2$ in the
Bruhat-Tits tree at $P$, under the action of the normalizer of the maximal
$\oink_X(U)$-order $\Ea(U)=\Da(U)$, for any maximal order $\Da\supseteq\Ea$. 
We claim that these orbits correspond precisely 
to lines $\gamma:\mathfrak{i}_{0,2}\rightarrow \mathfrak{c}_P(\Da)$,
 that can be lifted to paths in $\mathfrak{t}(K_P)$. 
The latter condition rules out the maps satisfying $\gamma_V(n_0)=\gamma_V(n_2)=[\Da_{(n+1)P}]$
and $\gamma_V(n_1)=[\Da_{nP}]$, for some $n>0$,
 as such a map has no injective lifting, since $\Da_{nP}$ has a unique
neighbor in the class $[\Da_{(n+1)P}]$. All other simplicial maps 
$\gamma:\mathfrak{i}_{0,2}\rightarrow \mathfrak{c}_P(\Da)$
can be lifted to injective maps in $\mathfrak{t}(K_P)$ (see Fig. 1A),
and hence correspond to conjugacy classes of Eichler orders, provided that the claim holds.
In fact, the $P$-neighbors $\Da'\in[\Da_{(n-1)P}]$ of the order $\Da_{nP}$
correspond to the finite points of the projective line
$\mathbb{P}^1\big(\mathbb{F}(P)\big)$, and the group  $\Da_{nP}(X)^*$  contains upper
triangular matrices that act as arbitrary linear maps on $\mathbb{F}(P)$.
 As this action is 2-transitive, all orders $\Fa_n$ corresponding to lines $\{\gamma,\gamma'\}$
satisfying $\gamma_V(n_0),\gamma_V(n_2)\in[\Da_{(n-1)P}]$ and 
$\gamma_V(n_1)\in[\Da_{nP}]$,
 for a fixed $n$, are conjugates. 
This proves the claim for such classes, and for maps satisfying
$\gamma_V(n_0)=[\Da_{(n-1)P}]$ and $\gamma_V(n_2)=[\Da_{(n+1)P}]$
is even simpler.
 We conclude that all classes in this genus 
are those of the split orders $\Ea[P,P],\Ea[2P,0],\Ea[3P,-P],\dots$ toghether with the classes of the orders
$\Fa_n$ just described. 

The edges of the graph  $\mathfrak{c}_Q(\Ea)$ are in correspondence with the ideal
 grids of the shape shown in Figure 6A. By switching the role played by the places $P$ and $Q$, 
these grids are also in correspondence with lines
$\gamma:\mathfrak{i}_{0,2}\rightarrow \mathfrak{c}_P(\Da)$ in the graph in Figure 2C, where again we
must consider only the maps that can be lifted to lines in the Bruhat-Tits tree. 
A few of these grids are shown in Figure 6C-E. 
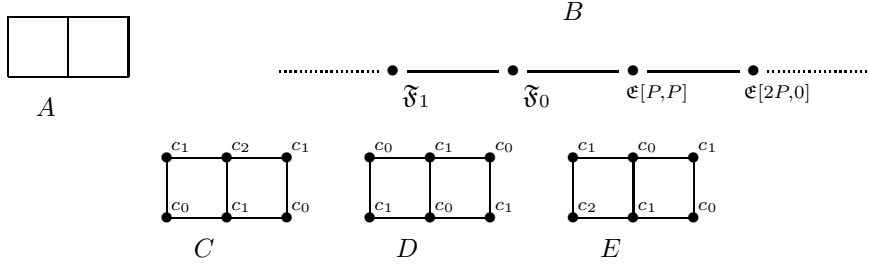
\begin{figure}
\[  
\unitlength 1mm % = 2.845pt
\linethickness{0.4pt}
\ifx\plotpoint\undefined\newsavebox{\plotpoint}\fi % GNUPLOT compatibility
\begin{picture}(20,15)(0,-6)
\put(0,0){\line(0,1){8}}\put(0,8){\line(1,0){16}}\put(0,0){\line(1,0){16}}\put(8,0){\line(0,1){8}}\put(16,0){\line(0,1){8}}
\put(5,-4){\makebox(0,0)[cc]{$A$}}
\end{picture}
\qquad\qquad
 \xygraph{
!{<0cm,0cm>;<.8cm,0cm>:<0cm,.8cm>::}
!{(0,2)}*+{\bullet}="a" !{(0.4,1.6)}*+{\Fa_1}="a1"
!{(2,2)}*+{\bullet}="b" !{(2.4,1.6)}*+{\Fa_0}="b1"
!{(4,2)}*+{\bullet}="c" !{(4.4,1.6)}*+{{}^{\Ea[P,P]}}="c1"
!{(6,2)}*+{\bullet}="x" !{(6.4,1.6)}*+{{}^{\Ea[2P,0]}}="x1"
!{(8,2)}*+{}="d"
!{(-2,2)}*+{}="m"
!{(3,3)}*+{B}="z" 
 "a"-"b" "b"-"c" "c"-"x" "x"-@{.}"d"
 "m"-@{.}"a"}  
\]
\[
\unitlength 1mm % = 2.845pt
\linethickness{0.4pt}
\ifx\plotpoint\undefined\newsavebox{\plotpoint}\fi % GNUPLOT compatibility
\begin{picture}(20,17)(0,-6)
\put(0,0){\line(0,1){8}}\put(0,8){\line(1,0){16}}\put(0,0){\line(1,0){16}}\put(8,0){\line(0,1){8}}\put(16,0){\line(0,1){8}}
\put(0,0){\makebox(0,0)[cc]{$\bullet$}}\put(0,8){\makebox(0,0)[cc]{$\bullet$}}
\put(8,0){\makebox(0,0)[cc]{$\bullet$}}\put(8,8){\makebox(0,0)[cc]{$\bullet$}}
\put(16,0){\makebox(0,0)[cc]{$\bullet$}}\put(16,8){\makebox(0,0)[cc]{$\bullet$}}
\put(2,1){\makebox(0,0)[cc]{${}^{c_0}$}}\put(2,9){\makebox(0,0)[cc]{${}^{c_1}$}}
\put(10,1){\makebox(0,0)[cc]{${}^{c_1}$}}\put(10,9){\makebox(0,0)[cc]{${}^{c_2}$}}
\put(18,1){\makebox(0,0)[cc]{${}^{c_0}$}}\put(18,9){\makebox(0,0)[cc]{${}^{c_1}$}}
\put(5,-4){\makebox(0,0)[cc]{$C$}}
\end{picture}\qquad
\unitlength 1mm % = 2.845pt
\linethickness{0.4pt}
\ifx\plotpoint\undefined\newsavebox{\plotpoint}\fi % GNUPLOT compatibility
\begin{picture}(20,17)(0,-6)
\put(0,0){\line(0,1){8}}\put(0,8){\line(1,0){16}}\put(0,0){\line(1,0){16}}\put(8,0){\line(0,1){8}}\put(16,0){\line(0,1){8}}
\put(0,0){\makebox(0,0)[cc]{$\bullet$}}\put(0,8){\makebox(0,0)[cc]{$\bullet$}}
\put(8,0){\makebox(0,0)[cc]{$\bullet$}}\put(8,8){\makebox(0,0)[cc]{$\bullet$}}
\put(16,0){\makebox(0,0)[cc]{$\bullet$}}\put(16,8){\makebox(0,0)[cc]{$\bullet$}}
\put(2,1){\makebox(0,0)[cc]{${}^{c_1}$}}\put(2,9){\makebox(0,0)[cc]{${}^{c_0}$}}
\put(10,1){\makebox(0,0)[cc]{${}^{c_0}$}}\put(10,9){\makebox(0,0)[cc]{${}^{c_1}$}}
\put(18,1){\makebox(0,0)[cc]{${}^{c_1}$}}\put(18,9){\makebox(0,0)[cc]{${}^{c_0}$}}
\put(5,-4){\makebox(0,0)[cc]{$D$}}
\end{picture}\qquad
\unitlength 1mm % = 2.845pt
\linethickness{0.4pt}
\ifx\plotpoint\undefined\newsavebox{\plotpoint}\fi % GNUPLOT compatibility
\begin{picture}(20,17)(0,-6)
\put(0,0){\line(0,1){8}}\put(0,8){\line(1,0){16}}\put(0,0){\line(1,0){16}}\put(8,0){\line(0,1){8}}\put(16,0){\line(0,1){8}}
\put(0,0){\makebox(0,0)[cc]{$\bullet$}}\put(0,8){\makebox(0,0)[cc]{$\bullet$}}
\put(8,0){\makebox(0,0)[cc]{$\bullet$}}\put(8,8){\makebox(0,0)[cc]{$\bullet$}}
\put(16,0){\makebox(0,0)[cc]{$\bullet$}}\put(16,8){\makebox(0,0)[cc]{$\bullet$}}
\put(2,1){\makebox(0,0)[cc]{${}^{c_2}$}}\put(2,9){\makebox(0,0)[cc]{${}^{c_1}$}}
\put(10,1){\makebox(0,0)[cc]{${}^{c_1}$}}\put(10,9){\makebox(0,0)[cc]{${}^{c_0}$}}
\put(18,1){\makebox(0,0)[cc]{${}^{c_0}$}}\put(18,9){\makebox(0,0)[cc]{${}^{c_1}$}}
\put(5,-4){\makebox(0,0)[cc]{$E$}}
\end{picture}
\]
\caption{The domino-shaped grid (A) used to compute the graph
in Ex. \ref{e44} (B). In (C)-(E) we have the grids corresponding to the three central edges in (B).
Again, we use $c_n=[\Da_{nP}]$.}\end{figure}
We conclude that the graph $\mathfrak{c}_Q(\Ea)$ looks as in Figure 6B. 
\end{example}

\begin{remark}

The previous example is ilustrated in Figure 7 where vertical edges denote $Q$-neighbors while
horizontal edges denote $P$-neighbors. We denote by $\Da_v$ the maximal order corresponding to
a vertex $v$. Assume the vertex denoted $v_0$ corresponds to the 
maximal order $\Da_{v_0}=\mathbb{M}_2(\oink_X)$, and that the frontal plane 
containing the vertices $w$, $v_0$, $z$, $x$, $y$ and $t$ is the infinite grid corresponding to
the cannonical basis, i.e., its vertices correspond precisely to orders split by the cannonical basis.
In analogy with Example \ref{ex42}, we can assume that the Eichler orders corresponding to 
the horizontal lines $\gamma(x,y)$ and $\gamma(w,z)$ are in the same orbit. There exists a 
matrix $M$ in $\mathrm{GL}_2(\mathbb{F})$, the stabilizer of $v_0$, that leaves invariant $z$, 
while sends $w$ to $w'$. However, it can be shown that this matrix does not leave the vertex $x$ 
invariant, mapping the line $\gamma(x,y)$ to a paralell line $\gamma(x',y')$ in a different plane, 
as shown on the right of Figure 7. In fact, if $u'$ denotes the vertex directly below $w'$ in the picture, 
the lines $\gamma(u,t)$ and $\gamma(u',t)$ are in different orbits. 
 In this case the lines above $\gamma(z,w')$ correspond to split orders, while the ones below it are not. 
\begin{figure}
\[
\unitlength 1mm % = 2.845pt
\linethickness{0.4pt}
\ifx\plotpoint\undefined\newsavebox{\plotpoint}\fi % GNUPLOT compatibility
\begin{picture}(34.5,20)(28,6)
\put(30,7){\framebox(28,12)[cc]{}}
\put(37,19){\line(0,-1){12}}
\put(44,19){\line(0,-1){12}}
\put(51,19){\line(0,-1){12}}
%\emline(43.5,19)(53.25,26)
\multiput(43.5,19)(.046875,.033653846){208}{\line(1,0){.046875}}
%\end
\put(53.25,26){\line(0,-1){6.8}}
\put(48.75,22.7){\line(0,-1){3.7}}
\put(30,13){\line(1,0){28}}
\put(44,19){\makebox(0,0)[cc]{$\bullet$}}\put(43,20){\makebox(0,0)[cc]{${}^{v_0}$}}
\put(37,19){\makebox(0,0)[cc]{$\bullet$}}\put(36,20){\makebox(0,0)[cc]{${}^{w}$}}
\put(51,19){\makebox(0,0)[cc]{$\bullet$}}\put(52,20){\makebox(0,0)[cc]{${}^{z}$}}
\put(37,13){\makebox(0,0)[cc]{$\bullet$}}\put(38,14){\makebox(0,0)[cc]{${}^{u}$}}
\put(44,13){\makebox(0,0)[cc]{$\bullet$}}\put(43,14){\makebox(0,0)[cc]{${}^{x}$}}
\put(58,13){\makebox(0,0)[cc]{$\bullet$}}\put(57,14){\makebox(0,0)[cc]{${}^{y}$}}
\put(51,13){\makebox(0,0)[cc]{$\bullet$}}\put(50,14){\makebox(0,0)[cc]{${}^{t}$}}
\put(48.75,22.7){\makebox(0,0)[cc]{$\bullet$}}\put(47.75,23.7){\makebox(0,0)[cc]{${}^{w'}$}}
\end{picture}
\unitlength 1mm % = 2.845pt
\linethickness{0.4pt}
\ifx\plotpoint\undefined\newsavebox{\plotpoint}\fi % GNUPLOT compatibility
\begin{picture}(34.5,20)(28,6)
\put(30,7){\framebox(28,12)[cc]{}}
\put(37,19){\line(0,-1){12}}
\put(44,19){\line(0,-1){12}}
\put(51,19){\line(0,-1){12}}
\multiput(58,19)(.033653846,-.046875){208}{\line(1,0){.046875}}
%\end
\put(58,14.3){\line(1,0){3.5}}
\put(58,9.6){\line(1,0){7}}
\put(30,13){\line(1,0){28}}
\put(44,19){\makebox(0,0)[cc]{$\bullet$}}\put(43,20){\makebox(0,0)[cc]{${}^{v_0}$}}
\put(37,19){\makebox(0,0)[cc]{$\bullet$}}\put(36,20){\makebox(0,0)[cc]{${}^{w}$}}
\put(51,19){\makebox(0,0)[cc]{$\bullet$}}\put(52,20){\makebox(0,0)[cc]{${}^{z}$}}
\put(44,13){\makebox(0,0)[cc]{$\bullet$}}\put(43,14){\makebox(0,0)[cc]{${}^{x}$}}
\put(37,13){\makebox(0,0)[cc]{$\bullet$}}\put(38,14){\makebox(0,0)[cc]{${}^{u}$}}
\put(58,13){\makebox(0,0)[cc]{$\bullet$}}\put(57,14){\makebox(0,0)[cc]{${}^{y}$}}
\put(51,13){\makebox(0,0)[cc]{$\bullet$}}\put(50,14){\makebox(0,0)[cc]{${}^{t}$}}
\put(61.5,14){\makebox(0,0)[cc]{$\bullet$}}\put(63,15){\makebox(0,0)[cc]{${}^{y'}$}}
\end{picture}
\]
\caption{The global orders in Example \ref{e44}.}\end{figure}
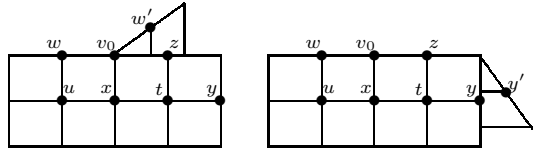
\end{remark}

\subparagraph{Proof of Theorem  \ref{t2}}
Let $D$ be an effective divisor on $X=\mathbb{P}^1$. First we assume $D$ is the sum of at most two different
places of degree $1$. Then $D\leq P_1+P_2$ for some pair of places $P_1$ and $P_2$ of degree 1.
By looking at the product of the local Bruhat-Tits trees at $P_1$ and $P_2$, we observe that 
any order $\Ea\in\mathbb{O}_D$ corresponds to a vertex, edge or grid contained in the concrete
$1$-times-$1$ grid $\mathbb{S}(\Ea')$,
 for an order $\Ea'\in\mathbb{O}_{P_1+P_2}$. The latter is a split order, as shown in Lemma 
\ref{l41}, whence its ring $\Ea'(X)$ of global sections contains a non-trivial idempotent.
Since $\Ea(X)\supseteq\Ea'(X)$, the same holds for $\Ea$, and the result follows. In any other case,
$D\geq B$ for a divisor $B$ in one of the following cases: 
\begin{enumerate}
\item $B=2P$, where $\mathop{\mathrm{deg}}(P)=1$,
\item $B=P_1+P_2+P_3$, where $\mathop{\mathrm{deg}}(P_1)=
\mathop{\mathrm{deg}}(P_2)=\mathop{\mathrm{deg}}(P_3)=1$, or 
\item $B=P$, where $P$ is a place satisfying $\mathop{\mathrm{deg}}(P)>1$.
\end{enumerate}
Then the result follows from Lemma \ref{l42}, Lemma \ref{l41} or Lemma \ref{l34}, respectively,
 by a similar reasoning.
\qed 

\begin{lemma}
Let $X$ be an arbitrary smooth curve, and
let $P_1,\dots,P_n\in |X|$ be different. Then there is only a finite number
of conjugacy classes of non-split orders in  $\mathbb{O}_{P_1+\cdots+P_n}$.
\end{lemma}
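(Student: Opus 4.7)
The plan is to argue by induction on $n$, with the base case $n=0$ (so $\mathbb{O}_0$ is the genus of maximal orders) provided by the Finiteness Theorem of \S\ref{intro}. For the inductive step, set $D = P_1 + \cdots + P_{n-1}$ and study the classifying graph $\mathfrak{g} = \mathfrak{c}_{P_n}(\mathbb{O}_D)$, whose vertices and pairs of reverse edges parameterize conjugacy classes in $\mathbb{O}_D$ and $\mathbb{O}_{D+P_n}$, respectively. Since $\mathfrak{g}$ is a quotient of $\mathfrak{t}(K_{P_n})$ it is locally finite of valency at most $|\mathbb{F}(P_n)|+1$, and the induction hypothesis leaves only finitely many non-split vertices, which among them are incident to only finitely many edges. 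It therefore suffices to bound the number of edges of $\mathfrak{g}$ which connect two split vertices but correspond to a non-split Eichler order of level $D+P_n$.

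For such an edge the associated order $\Ea_1 \cap \Ea_2$ is split precisely when $\Ea_1$ and $\Ea_2$ share a common non-trivial idempotent. I would pick a representative $\Ea_1 = \Ea[B, B']$ of a split vertex, with $B+B' = D$ and global idempotent $\rho = \sbmattrix{1}{0}{0}{0}$; after possibly applying the swap of Equation (\ref{switchends}) we may assume $\deg B \geq 0$, so Equation (\ref{eqn33}) in Remark \ref{r35} describes $\Ea_1(X)^*$ in upper-triangular form with unipotent part parametrized by $\mathfrak{L}^{-B'}(X)$. The $P_n$-neighbors of $\Ea_1$ in $\mathfrak{t}(K_{P_n})$ correspond to points of $\mathbb{P}^1(\mathbb{F}(P_n))$, and the two $\rho$-eigenlines $[e_1], [e_2]$ single out the only two neighbors still containing $\rho$; every other neighbor is moved by the unipotent subgroup, which sends $[e_2]$ to $[f|_{P_n}:1]$ as $f$ ranges over $\mathfrak{L}^{-B'}(X)$. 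When the reduction $\mathfrak{L}^{-B'}(X) \to \mathbb{F}(P_n)$ is surjective, the $\Ea_1(X)^*$-orbits on $\mathbb{P}^1(\mathbb{F}(P_n))$ reduce to exactly $\{[e_1]\}$ and its complement, and hence each of the two edges of $\mathfrak{g}$ at $[\Ea_1]$ admits a representative pair $(\Ea_1, \Ea_2)$ with $\rho \in \Ea_2$: the associated level-$(D+P_n)$ order is split.

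It remains to count the split conjugacy classes for which this surjectivity fails. Riemann--Roch on $X$ of genus $g$ guarantees surjectivity as soon as $\deg(-B') > 2g-2+\deg P_n$, and the rules (\ref{switchends}) and (\ref{noswitchends}) imply that the unordered pair $\{\deg B, \deg B'\}$, which sums to $\deg D$, is a conjugacy invariant of $\Ea_1$, so simultaneous failure on both sides forces this pair into a finite set of integer pairs. The finiteness of $\mathrm{Pic}^d(X)$ over the finite constant field $\mathbb{F}$ then limits the number of divisor classes in each admissible degree, leaving only finitely many exceptional split conjugacy classes of $\Ea_1$; by local finiteness of $\mathfrak{g}$ these account for only finitely many additional edges, closing the induction. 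The delicate point is the orbit count: Remark \ref{r35} is literally stated for maximal orders, but the same computation applies verbatim to $\Ea_1$ because it is maximal at $P_n$, and one must verify carefully that the two eigenlines of $\rho$ fall into distinct orbits, which follows from the unipotent subgroup's fixing $[e_1]$ while translating $\mathbb{P}^1(\mathbb{F}(P_n)) \setminus \{[e_1]\}$ via the reduction map.
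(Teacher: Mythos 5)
Your proposal is correct and follows essentially the same route as the paper's proof: induction on the number of places, with the inductive step carried out on the classifying graph $\mathfrak{c}_{P_n}(\mathbb{O}_D)$, using the orbit count of Remark \ref{r35} together with Riemann--Roch and the finiteness of $\mathrm{Pic}^d(X)$ to isolate finitely many exceptional split vertices, and local finiteness of the graph to conclude. The only differences are cosmetic: you normalize $\deg B\geq \deg B'$ where the paper instead discards finitely many classes to make $\deg B$ arbitrarily negative, and you make explicit the bookkeeping of edges incident to non-split vertices that the paper leaves implicit.
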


\begin{proof}
 This was proved by Serre for the genus of maximal orders, where $n=0$.
 In fact, this follows from  \cite[Th. II.9]{trees}, since by construction all vertices
 in the cusps correspond to split bundles. We finish the proof by induction on $n$.
 Conjugacy classes in $\mathbb{O}_{P_1+\cdots+P_t}$ are in correspondence with the vertices in 
the classifying graph $\mathfrak{c}_{P_{t+1}}(\mathbb{O}_{P_1+\cdots+P_t})$,
 so all but a finite number of them
correspond to the conjugacy class of an order $\Ea[B,B']$, where $B+B'=P_1+\cdots+P_t$. By switching
$B$ and $B'$ if needed, we can assume 
$\mathop{\mathrm{deg}}(B)\leq\mathop{\mathrm{deg}}(B')$.  Furthermore, a second
order $\Ea[B'',B''']\in\mathbb{O}_{P_1+\cdots+P_t}$ with $B''$ linearly equivalent to $B$ is in the same 
conjugacy class, so by leaving out a finite number of conjugacy classes, we can always assume
$\mathop{\mathrm{deg}}(B)<M$ for any prescribed constant $M$. In particular, we can assume also
that $B'$ has positive degree, and therefore $\mathfrak{L}^{-B'}(X)=\{0\}$.
We can further assume that
$$\mathrm{dim}_{\finitum}\Big(\mathfrak{L}^{-B}(X)/\mathfrak{L}^{-B-P_{t+1}}(X)\Big)=
[\finitum(P_{t+1}):\finitum]$$
 by Riemann-Roch' Theorem.  We conclude that $\Ea[B,B'](X)^*$ 
acts on the set of neighbors of $\Ea[B,B']$ with two orbits,
 by Remark \ref{r35}. In particular,
the corresponding vertex on $\mathfrak{c}_{P_{t+1}}(\mathbb{O}_{P_1+\cdots+P_t})$
 has valency one or two, and therefore every ideal grid having the grid corresponding
 to $\Ea[B,B']$ as a $P_{t+1}$-cap corresponds to either of the non-isomorphic bundles 
$\Ea[B+P_{t+1},B']$ or $\Ea[B,B'+P_{t+1}]$,which are both split. As every
ideal $(P_1+\cdots+P_t)$-grid is the $P_{t+1}$-cap of a finite number of ideal
$(P_1+\cdots+P_t+P_{t+1})$-grids, the result follows.
\end{proof}

\subparagraph{Proof of Theorem  \ref{t1}}
 If $D$ a multiplicity-free effective divisor, then the result follows from the preceding lemma. Assume now
that $D$ is not multiplicity-free. Then there is a place $P\in|X|$ satisfying $2P\leq D$. It follows that every
order in $\mathbb{O}_{2P}$ contains an order in $\mathbb{O}_D$. Now the result follows from two 
observations:
\begin{enumerate}
\item Every order containing a split order is split.
\item Every order in $\mathbb{O}_D$ is contained in a finite number of orders in $\mathbb{O}_{2P}$.
\end{enumerate}
The first statement follows since splitting is equivalent to the existence of an idempotent
global section, as in the proof of Theorem \ref{t2}.
The second statement is an immediate consequence of the combinatorial structure of products of 
Bruhat-Tits trees. We conclude from Lemma \ref{l42} that
there is an infinite number of non-conjugate orders in  $\mathbb{O}_D$ contained in non-split orders
in $\mathbb{O}_{2P}$, whence the result follows. \qed
 
\section{Computing fundamental domains for congruence subgroups of $\mathrm{GL}_2(A)$}

In all of this section $A=\oink_X(U)$ for a suitable open set $U=X-\{P_\infty\}$, although later
we specify to the case $A=\mathbb{F}[t]$. Let $K_\infty$ be the completion of $\mathbb{F}(X)$
at $P_\infty$, $\mathcal{O}_{\infty}$ its ring of integers and 
$\nu=\nu_\infty= -\deg$ the valuation function in $K_{\infty}$. 
We identify the Bruhat-Tits tree for $K_\infty$ with the Ball tree, whose vertices are
the closed balls in $K_\infty$, and two of them are neighbors if one is a proper sub-ball of the other.
See \cite[\S4]{omeat} for details. By an end of a graph $\mathfrak{g}$, we mean an equivalence class of rays 
$\rho:\mathfrak{i}_{0,\infty}\rightarrow\mathfrak{g}$, where two rays $\rho$ and $\rho'$  are equivalently precisely
when $\rho_A(a_n)=\rho'_A(a_{n+t})$ for a fixed integer $t$ and every big enough positive integer $n$.
Ends of the Ball tree are naturally in correspondence with the elements of $\mathbb{P}^1(K_\infty)$.
The same holds for its subgraphs. We say that a subgraph $\mathfrak{h}$ contains and end
$a\in\mathbb{P}^1(K_\infty)$ if there is at least one  ray $\rho:\mathfrak{i}_{0,\infty}\rightarrow\mathfrak{h}$ 
in the corresponding equivalence class. We write $a\in\mathfrak{h}$ in this case. As it is the case
for any tree, the Ball tree contains a unique line between any two vertices or ends. The smallest
subtree containing any number of ends and vertices, as the ones mentioned in Theorem \ref{t4},
 is the union of the images of the lines
between each pair of such ends or vertices. 

Recall that quotient graphs are defined here in terms of the baricentric subdivision. In fact, to define
fundamental domains in our context, we perform a surgery on the graph to turn in to a tree. For this,
we choose a maximal tree $\mathfrak{m}$ in the quotient graph having no new half edges, i.e., we remove
some "edges", that in the barycentric subdivision need to be interpreted as path of length $2$ with 
a barycenter in the middle. Each on of these "edges" is replace by a pair of half edges, and the same is done at every
preimage in $\mathfrak{t}(P_\infty)$ of such vertices. Then any lifting of the resulting tree
to the barycentric subdivision of $\mathfrak{t}(P_\infty)$ is called
a fundamental domain. Note that the quotient graph can be recover from
the fundamental domain and the pairs of nonvertices that must be glued. See Fig. \ref{newfigure}.

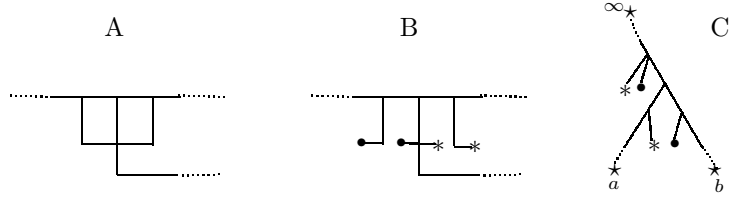
\begin{figure}
\[
\unitlength .7mm % = 2.845pt
\linethickness{0.4pt}
\ifx\plotpoint\undefined\newsavebox{\plotpoint}\fi % GNUPLOT compatibility
\begin{picture}(139.75,33.5)(0,0)
\put(16,22.75){\line(1,0){24.5}}
\put(21.75,22.5){\line(0,-1){8.75}}
\put(21.75,13.75){\line(1,0){13.5}}
\put(35.25,13.5){\line(0,1){9.25}}
\put(28.5,22.5){\line(0,-1){14.5}}
\put(28.5,8){\line(1,0){11.5}}
%\dottedline(40.25,23)(48.75,22.75)
\multiput(40.18,22.93)(.94444,-.02778){10}{{\rule{.4pt}{.4pt}}}
%\end
%\dottedline(16,22.75)(8.25,23)
\multiput(15.93,22.68)(-.96875,.03125){9}{{\rule{.4pt}{.4pt}}}
%\end
%\dottedline(41,7.75)(47.75,8)
\multiput(40.93,7.68)(.96429,.03571){8}{{\rule{.4pt}{.4pt}}}
%\end
\put(73.25,22.75){\line(1,0){24.5}}
\put(79,22.5){\line(0,-1){8.75}}
\put(92.5,13.5){\line(0,1){9.25}}
\put(85.75,22.5){\line(0,-1){14.5}}
\put(85.75,8){\line(1,0){11.5}}
%\dottedline(97.5,23)(106,22.75)
\multiput(97.43,22.93)(.94444,-.02778){10}{{\rule{.4pt}{.4pt}}}
%\end
%\dottedline(73.25,22.75)(65.5,23)
\multiput(73.18,22.68)(-.96875,.03125){9}{{\rule{.4pt}{.4pt}}}
%\end
%\dottedline(98.25,7.75)(105,8)
\multiput(98.18,7.68)(.96429,.03571){8}{{\rule{.4pt}{.4pt}}}
%\end
\put(92.5,13.25){\line(1,0){3.25}}
%\emline(89,13.75)(82.25,14)
\multiput(89,13.75)(-.84375,.03125){8}{\line(-1,0){.84375}}
%\end
\put(79,14){\line(-1,0){4.5}}
%\emline(127.75,33.5)(139.75,13)
\multiput(127.75,33.5)(.0337078652,-.0575842697){356}{\line(0,-1){.0575842697}}
%\end
%\emline(132.5,25.25)(125,13.5)
\multiput(132.5,25.25)(-.033632287,-.052690583){223}{\line(0,-1){.052690583}}
%\end
%\emline(129.5,20.25)(130,14.5)
\multiput(129.5,20.25)(.0333333,-.3833333){15}{\line(0,-1){.3833333}}
%\end
%\emline(135.75,20)(134.25,14.75)
\multiput(135.75,20)(-.03333333,-.11666667){45}{\line(0,-1){.11666667}}
%\end
%\emline(129.25,31)(125.25,25.25)
\multiput(129.25,31)(-.033613445,-.048319328){119}{\line(0,-1){.048319328}}
%\end
%\emline(129.5,30.75)(128,25.5)
\multiput(129.5,30.75)(-.03333333,-.11666667){45}{\line(0,-1){.11666667}}
%\dottedline(127.75,33.5)(126,37.25)
\multiput(127.68,33.43)(-.35,.75){6}{{\rule{.4pt}{.4pt}}}
%\end
%\dottedline(139.75,13.25)(142.25,9.25)
\multiput(139.68,13.18)(.41667,-.66667){7}{{\rule{.4pt}{.4pt}}}
%\end
%\dottedline(125.25,14)(123,10.25)
\multiput(125.18,13.93)(-.45,-.75){6}{{\rule{.4pt}{.4pt}}}
\put(75,14){\makebox(0,0)[cc]{${}_\bullet$}}\put(82.6,14){\makebox(0,0)[cc]{${}_\bullet$}}
\put(89.5,13.5){\makebox(0,0)[cc]{$*$}}\put(96.5,13.3){\makebox(0,0)[cc]{$*$}}
\put(125,24){\makebox(0,0)[cc]{$*$}}\put(128.2,24.5){\makebox(0,0)[cc]{${}_\bullet$}}
\put(130.5,13.5){\makebox(0,0)[cc]{$*$}}\put(134.5,13.8){\makebox(0,0)[cc]{${}_\bullet$}}
\put(126,39){\makebox(0,0)[cc]{$\star$}}\put(123,9){\makebox(0,0)[cc]{$\star$}}
\put(123,39){\makebox(0,0)[cc]{${}^\infty$}}\put(123,6){\makebox(0,0)[cc]{${}_a$}}
\put(143,6){\makebox(0,0)[cc]{${}_b$}}\put(142,9){\makebox(0,0)[cc]{$\star$}}
\put(28,36){\makebox(0,0)[cc]{A}}\put(84,36){\makebox(0,0)[cc]{B}}
\put(143,36){\makebox(0,0)[cc]{C}}
%\end
\end{picture}
\]
\caption{Surgery of the quotient graph (A, B) and one choice of a fundamental domain (C).
Bullets and asterisks denote two corresponding pairs of nonvertices. Stars denote ends.}
\label{newfigure}
\end{figure}

\begin{example}\label{e51}
Assume $A=\mathbb{F}[t]$. 
In Figure 8 we can see the minimal subgraph $\mathfrak{s}$ containing
$0$, $\infty$ and each $M^{-1}$ with $M$ dividing $N$ for $N=t(t-1)$ or $N=t(t-1)(t-2)$.
In the latter case we assume $\mathrm{char}(\mathbb{F})>2$.
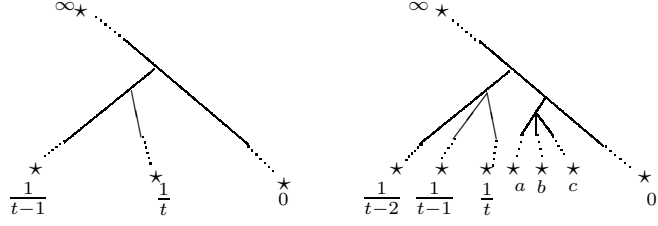
\begin{figure}
\[
% This is a LaTeX picture output by TeXCAD.
% File name: [trees22.pic].
% Version of TeXCAD: 4.5
% Reference / build: 13-Sep-2015 (rev. a64)
% For new versions, check: http://texcad.sf.net/
% Options on the following lines.
%\grade{\on}
%\emlines{\off}
%\epic{\off}
%\beziermacro{\on}
%\reduce{\on}
%\snapping{\off}
%\pvinsert{% Your \input, \def, etc. here}
%\quality{8.000}
%\graddiff{0.005}
%\snapasp{1}
%\zoom{4.0000}
\unitlength 1mm % = 2.845pt
\linethickness{0.4pt}
\ifx\plotpoint\undefined\newsavebox{\plotpoint}\fi % GNUPLOT compatibility
\begin{picture}(73.5,27.25)(0,0)
%\emline(19.75,27.25)(36.25,13.25)
\multiput(19.75,27.25)(.0397590361,-.0337349398){415}{\line(1,0){.0397590361}}
%\end
%\emline(23.75,23.25)(11.75,13.5)
\multiput(23.75,23.25)(-.0415224913,-.0337370242){289}{\line(-1,0){.0415224913}}
%\end
\put(20.75,20.5){\line(1,-5){1.25}}
%\emline(57,27)(73.5,13)
\multiput(67,27)(.0397590361,-.0337349398){415}{\line(1,0){.0397590361}}
%\end
%\emline(61,23)(49,13.25)
\multiput(71,23)(-.0415224913,-.0337370242){289}{\line(-1,0){.0415224913}}
%\end
\put(68,20.25){\line(1,-5){1.25}}
\put(68,20){\line(-3,-4){4.5}}
%\emline(65.75,19.5)(62.5,14.5)
\multiput(75.75,19.5)(-.033505155,-.051546392){97}{\line(0,-1){.051546392}}
%\end
%\emline(64.5,17.5)(66.75,14.5)
\multiput(74.5,17.5)(.03358209,-.04477612){67}{\line(0,-1){.04477612}}
%\end
\put(74.5,17.75){\line(0,-1){3.25}}
%\dottedline(57,26.75)(53.5,29.75)
\multiput(66.93,26.68)(-.58333,.5){7}{{\rule{.4pt}{.4pt}}}
%\end
%\dottedline(53.75,14)(52,11.5)
\multiput(63.68,13.93)(-.4375,-.625){5}{{\rule{.4pt}{.4pt}}}
%\end
%\dottedline(59.25,13.75)(58.75,10.5)
\multiput(69.18,13.68)(-.125,-.8125){5}{{\rule{.4pt}{.4pt}}}
%\end
%\dottedline(64.75,14.5)(65.25,11.75)
\multiput(74.68,14.43)(.1667,-.9167){4}{{\rule{.4pt}{.4pt}}}
%\end
%\dottedline(66.75,14.25)(68.25,12)
\multiput(76.68,14.18)(.375,-.5625){5}{{\rule{.4pt}{.4pt}}}
%\end
%\dottedline(19.75,27.25)(16,30.25)
\multiput(19.68,27.18)(-.625,.5){7}{{\rule{.4pt}{.4pt}}}
%\end
%\dottedline(11.75,13.75)(9.25,11.75)
\multiput(11.68,13.68)(-.5,-.4){6}{{\rule{.4pt}{.4pt}}}
%\end
%\dottedline(22.25,14.25)(23.25,10.75)
\multiput(22.18,14.18)(.2,-.7){6}{{\rule{.4pt}{.4pt}}}
%\end
%\dottedline(36.25,13)(39.5,9.75)
\multiput(36.18,12.93)(.54167,-.54167){7}{{\rule{.4pt}{.4pt}}}
%\end
%\dottedline(49.25,13.25)(47.25,11.25)
\multiput(59.18,13.18)(-.4,-.4){6}{{\rule{.4pt}{.4pt}}}
%\end
%\dottedline(72.75,14.5)(62,11.75)
\multiput(72.68,14.43)(-.25,-.9167){4}{{\rule{.4pt}{.4pt}}}
%\end
%\dottedline(83.75,13)(77,10.25)
\multiput(83.68,12.93)(.65,-.55){6}{{\rule{.4pt}{.4pt}}}
\put(74.5,17.75){\line(0,-1){3.25}}
\put(41,8){\makebox(0,0)[cc]{$\star$}}\put(24,9){\makebox(0,0)[cc]{$\star$}}
\put(41,6){\makebox(0,0)[cc]{${}_0$}}\put(25,6){\makebox(0,0)[cc]{$\frac1t$}}
\put(8,10){\makebox(0,0)[cc]{$\star$}}\put(14,31){\makebox(0,0)[cc]{$\star$}}
\put(7,6){\makebox(0,0)[cc]{$\frac1{t-1}$}}\put(12,31){\makebox(0,0)[cc]{${}^\infty$}}
\put(89,9){\makebox(0,0)[cc]{$\star$}}\put(68,10){\makebox(0,0)[cc]{$\star$}}
\put(90,6){\makebox(0,0)[cc]{${}_0$}}\put(68,6){\makebox(0,0)[cc]{$\frac1t$}}
\put(62,10){\makebox(0,0)[cc]{$\star$}}\put(56,10){\makebox(0,0)[cc]{$\star$}}
\put(61,6){\makebox(0,0)[cc]{$\frac1{t-1}$}}\put(54,6){\makebox(0,0)[cc]{$\frac1{t-2}$}}
\put(62,31){\makebox(0,0)[cc]{$\star$}}
\put(59,31){\makebox(0,0)[cc]{${}^\infty$}}
\put(71.5,10){\makebox(0,0)[cc]{$\star$}}\put(75.3,10){\makebox(0,0)[cc]{$\star$}}
\put(72.5,7){\makebox(0,0)[cc]{${}^a$}}\put(75.3,7){\makebox(0,0)[cc]{${}^b$}}
\put(79.5,7){\makebox(0,0)[cc]{${}^c$}}\put(79.5,10){\makebox(0,0)[cc]{$\star$}}
\end{picture}
\]
\caption{The global orders in Example \ref{e51}. Here, $a=\frac1{t(t-1)}$ $b=\frac1{t(t-2)}$
and  $c=\frac1{(t-1)(t-2)}$.}\end{figure}
\end{example}

\paragraph{Proof of Theorem \ref{t4}}

Set $P_i\in|\mathbb{P}^1_{\mathbb{F}}|$ to be the point corresponding to $\lambda_i$, or equivalently
 assume $\mathrm{div}(x-\lambda_i)=P_i-P$, where $P=P_\infty$ denotes the place at infinity.
Repetitive use of Example \ref{ex42} shows that the classifying graph 
$\mathfrak{c}_P(\mathbb{O}_{P_1+\dots+P_n})$ has a unique cusp. The natural cover 
$\psi:\mathfrak{s}_P(\mathbb{O}_{P_1+\dots+P_n})\twoheadrightarrow 
\mathfrak{c}_P(\mathbb{O}_{P_1+\dots+P_n})$
is at most $2^n$-to-one, as $2^n$ is the order of the group $\Gamma/\Gamma_0(\mathfrak{E})$, in the notations
of Remark \ref{r35} for $\mathfrak{E}\in\mathbb{O}_{P_1+\dots+P_n}$,  by
\cite[Th. 1.2]{scffgeo}. Note that $\Gamma_0(\mathfrak{E})=\Gamma_N/K^*$.
It suffices, therefore, to prove that the restriction of $\psi$ to the tree $\mathfrak{s}$
is an injection. Consequently, the result follows from next result:

\begin{lemma}
The vertices in $\mathfrak{s}$ are in different $\Gamma_N$-orbits.
\end{lemma}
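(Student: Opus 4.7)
My plan is to reduce to showing that the $2^n$ ends of $\mathfrak{s}$—namely $0$, $\infty$, and the $2^n-2$ ends $1/M$ corresponding to proper monic divisors $M$ of $N$—lie in pairwise distinct $\Gamma_N$-orbits on $\mathbb{P}^1(K_\infty)$. Combined with the covering structure already in hand, this end-separation statement should suffice: the cover $\psi:\mathfrak{s}_P\twoheadrightarrow\mathfrak{c}_P$ has degree at most $|\Gamma/\Gamma_0(\mathfrak{E})|=2^n$, and iterating Example \ref{ex42} shows that $\mathfrak{c}_P(\mathbb{O}_{P_1+\cdots+P_n})$ has a unique cusp.

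For the end-separation step I would introduce, for each $x/y\in\mathbb{P}^1(K)$ represented in lowest terms with $x,y\in\mathbb{F}[t]$, the invariant $d(x/y)=\gcd(y,N)$, regarded as a monic divisor of $N$. Working locally at each prime $p\mid N$ and using both the squarefreeness of $N$ and the fact that $\det(\gamma)\in\mathbb{F}^*$ forces $a,d$ to be units modulo $p$ whenever $\gamma=\sbmattrix abcd\in\Gamma_N$ has $c\equiv 0\pmod p$, a short calculation shows that $d$ is a $\Gamma_N$-invariant. Since $d(0)=1$, $d(\infty)=N$, and $d(1/M)=M$, the $d$-values of the $2^n$ ends above are precisely the $2^n$ monic divisors of $N$, all distinct, so the ends lie in $2^n$ distinct orbits.

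To bootstrap to the statement for vertices, the $2^n$ distinct $\Gamma_N$-orbits of ends produce $2^n$ distinct cusps in $\mathfrak{s}_P$, which combined with the degree bound and the unique-cusp property of $\mathfrak{c}_P$ forces $\mathfrak{s}_P$ to have exactly $2^n$ cusps with $\psi$ being exactly $2^n$-to-one eventually along each. Vertices of $\mathfrak{s}$ sufficiently deep along each of its $2^n$ rays therefore inject into $\mathfrak{s}_P$, because they are distinguished by the cusp on which they lie and by their distance along it. For the finite core of $\mathfrak{s}$, the iterative construction of Example \ref{ex42} explicitly exhibits $2^n$ pairwise $\Gamma_N$-inequivalent lifts in $\mathfrak{s}_P$ of each finite-part vertex of $\mathfrak{c}_P$, and by construction $\mathfrak{s}$ is a transversal for this free $\Gamma/\Gamma_0$-action on preimages.

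The hard part is the invariance of $d$ in the second paragraph: this is the function-field analogue of the classical cusp count for the Hecke group $\Gamma_0(N)$ with squarefree $N$, and while the local argument is short, one must be careful to patch across the prime divisors of $N$ and to keep track of the role of the unimodular condition. Everything else is organizational and relies only on the classifying-graph and S-graph machinery developed earlier in the paper.
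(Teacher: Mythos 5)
Your end-separation argument is correct and even elegant: the invariant $d(x/y)=\gcd(y,N)$ is indeed preserved by $\Gamma_N$ (squarefreeness of $N$ and $\det\in\mathbb{F}^*$ give exactly what is needed locally at each $p\mid N$), and it does show that $0$, $\infty$ and the ends $1/M$ lie in $2^n$ distinct $\Gamma_N$-orbits on $\mathbb{P}^1(K)$. But this separates \emph{ends} (equivalently, cusps of $\Gamma_N\backslash\mathfrak{t}(K_\infty)$), not \emph{vertices}, and the passage from one to the other is where your proof has a genuine gap. Two distinct vertices of $\mathfrak{s}$ could a priori be identified by some $g\in\Gamma_N$ without $g$ matching up any of the distinguished ends: $g$ maps the ray from $v$ toward an end $e$ of $\mathfrak{s}$ to a ray toward $g\cdot e$, which need not be an end of $\mathfrak{s}$ at all. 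Your counting argument (exactly $2^n$ cusps upstairs, each hit by one ray of $\mathfrak{s}$) controls vertices only ``sufficiently deep'' along the rays, with no effective bound on where the cusps begin; and for the finite core of $\mathfrak{s}$ you appeal to Example \ref{ex42} exhibiting ``$2^n$ pairwise inequivalent lifts'' with $\mathfrak{s}$ ``by construction a transversal.'' That last claim is essentially the lemma itself: Example \ref{ex42} is a computation in the classifying graph and says nothing about the specific geometric tree $\mathfrak{s}$ defined by convex hulls of explicit ends in the Ball-tree; identifying its vertices with a transversal of the fibers of $\psi$ is precisely what has to be proved. Note also that the covering $\psi$ is only asserted to be \emph{at most} $2^n$-to-one, so the $\Gamma/\Gamma_0$-action on fibers need not be free at every vertex, which undercuts the transversality claim as stated.

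The paper closes this gap by brute force rather than by cusp theory: every vertex of $\mathfrak{s}$ is written explicitly as a ball $B_x^{|r|}$ with $x\in\{0\}\cup\{1/M\}$, the condition $g\cdot B_1=B_2$ for $g\in\Gamma_N$ is translated into the integrality of an explicit matrix $\pi^{(r_2-r_1)/2}h_2^{-1}gh_1\in\mathrm{GL}_2(\mathcal{O}_\infty)$, and a case analysis on valuations and degrees of the entries (using $\det(g)\in\mathbb{F}^*$ and $N\mid c$) forces $B_1=B_2$. If you want to salvage your approach, you would need to (i) prove an effective statement about where the cusps of $\Gamma_N\backslash\mathfrak{t}(K_\infty)$ begin relative to the rays of $\mathfrak{s}$, and (ii) give an actual argument for the finite core, at which point you would likely be doing the paper's matrix computation anyway. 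As it stands, your second paragraph proves a true and useful statement (the cusp count for $\Gamma_N$), but not the lemma.
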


\begin{proof}
Note that the lemma is well known if $N=1$ is maximal, so we assume troughout that this 
is not the case. We use $B_x^{|t|}$ for the ball of radius $|\pi|^t$ centered at $x\in K_\infty$, where
$\pi=\pi_{P_\infty}$ is a uniformizing parameter. Set $B_0=B_0^{|0|}$, the ball corresponding to $\Da_0$.
  Let $B_1=B_{x_1}^{|r_1|}$ and $B_2=B_{x_2}^{|r_2|}$ two vertices in $\mathfrak{s}$, 
where $x_1,x_2$ are $0$ or the inverse 
of a proper monic divisor of $N$. Assume that there exists a matrix $g=\sbmattrix ab{Nc}d\in\Gamma_N$
 satisfying $g.B_1=B_2$. Let 
Set $h_1= \sbmattrix {x_1}{\pi^{r_1}}{1}{0}$ and $h_2=\sbmattrix {x_2}{\pi^{r_2}}{1}{0}$,
so that $B_1=h_1.B_0$ and $B_2=h_2.B_0$.
Then, for some $\lambda \in K_{\infty}^{*}$, we must have 
$h_2^{-1}gh_1 \in \lambda \mathrm{GL}_2(\mathcal{O}_{\infty})$, as 
$K_\infty\mathrm{GL}_2(\mathcal{O}_{\infty})$ is 
the stabilicer of $B_0$. By taking determinants,
we get $2\nu(\lambda)=r_1-r_2$. Hence, $r_1-r_2$ is an even integer and 
$\pi^{\frac{r_2-r_1}{2}}h_2^{-1}gh_1 \in\mathrm{GL}_2(\mathcal{O}_{\infty})$. 
After a simple computation we have
\begin{equation}\label{eq def}
\bmattrix {\pi^{\frac{r_2-r_1}2}(d+Ncx_1)}{\pi^{\frac{r_2+r_1}2}Nc}
{\pi^{\frac{-r_1-r_2}2}(ax_1-dx_2+b-Ncx_1x_2)}{\pi^{\frac{r_1-r_2}2}(a-Ncx_2)}
\in\mathrm{GL}_2(\mathcal{O}_{\infty}).
\end{equation}
 We conclude that
$\pi^{\frac{r_1-r_2}{2}}(a-Ncx_2),\pi^{\frac{r_2-r_1}{2}}(d+Ncx_1) \in \mathcal{O}_{\infty}$. 
On the other hand, the polynomials $a-Ncx_2$ and $d+Ncx_1$ either vanish or have non-positive valuations.
This leaves three alternatives:
\begin{itemize}
\item[(i)] $r:=r_1=r_2$, toghether with $\nu(a-Ncx_2)=\nu(d+Ncx_1)=0$,
\item[(ii)] $a=Ncx_2$ or 
\item[(iii)] $d=-Ncx_1$.
\end{itemize}
 The last two alternatives imply $\det(g) \notin \mathbb{F}^{*}$, so (i) must hold.
The result follows if $x_1=x_2$, as this implies both balls are identical. We assume
in the sequel that $x_1\neq x_2$. 
From \eqref{eq def} and (i) we deduce the following facts:
\begin{itemize}
\item[(a)] $a-Ncx_2 = a_0 \in \mathbb{F}^*$,
\item[(b)] $d+Ncx_1 =d_0 \in \mathbb{F}^*$,
\item[(c)] $ Nc \in \pi^{-r}\mathcal{O}_{\infty}$, or equivalently $\deg(Nc)\leq r$, 
so in particular $r>0$, and
\item[(d)] $a_0 x_1- d_0 x_2 + b+Ncx_1x_2 = ax_1-dx_2+b-Ncx_1x_2 \in \pi^r \mathcal{O}_{\infty}$.
\end{itemize}

Note that $x_1$ and $x_2$ do not vanish simultaneously by the previous assumption.
 If we suppose that either $\nu(Ncx_1x_2)>0$ or $x_1x_2=0$,
 then the dominant term in the left hand side of identity of (d) is 
$b \in \mathbb{F}[t]$, unless it vanishes. As $r>0$ we must conclude the latter.
 It follows that $g=\sbmattrix {a}{0}{Nc}{d}$, in particular $a,d \in \mathbb{F}^{*}$. 
This can only mean $Nc x_2, Ncx_1 \in \mathbb{F}$, 
and then $c=0$, as at least one element in $\{x_1,x_2\}$ is the inverse of an proper monic divisor of $N$.
 From the preceeding considerations, we get the identity $B_2=g.B_1= B_{ax_1/d}^{|r|}$, 
whence $a=d$ and $B_1=B_2$.

Finally, assume that both $x_1, x_2 \neq 0$ and $\nu(Ncx_1x_2)\leq 0$. We can assume $r> \max{ \lbrace \nu(x_1), \nu(x_2)\rbrace }$  or we could redefine $x_1$ or $x_2$ by $0$ and return to the preceding case. Let
 \begin{equation}\label{bin}\epsilon= b+Ncx_1x_2 \in a_0x_1+d_0x_2+\pi^r\oink_\infty\subseteq
\pi \mathcal{O}_{\infty}.\end{equation}
 By a simple computation we get $\det(g) =a_0 d_0- \xi \in \mathbb{F}^{*}$, 
where $ \xi= Nc(a_0 x_1-d_0 x_2 + \epsilon)\in \mathbb{F}$.  If $\xi=0$, we have that $c=0$ or
\begin{equation}\label{eq divisi}
Nc+ bx_1^{-1} x_2^{-1} =\epsilon (x_1x_2)^{-1}= d_0 x_1^{-1}-a_0x_2^{-1}.
\end{equation} 
In the former case $b \in \pi\mathcal{O}_{\infty}$ by \eqref{bin}, so that $b=0$ and we argue as in the previous paragraph. In the latter case, equation  \eqref{eq divisi} implies that $x_1^{-1}$ divides to $x_2^{-1}$ 
and inversely, as each divides $N$, whence $B_1=B_2$.

Assume now that $\xi \neq 0$, so by (c) and (d) we get
$$r\geq-\nu(Nc)=\nu(a_0 x_1- d_0 x_2 +\epsilon)=\nu(a_0 x_1- d_0 x_2 +b+Ncx_1x_2)\geq r,$$
whence $\nu(a_0 x_1- d_0 x_2 +\epsilon)=-\nu(Nc)= r$. In this case we have
$$
|\pi^r|=|a_0 x_1- d_0 x_2 + \epsilon|=|x_1x_2||a_0x_2^{-1}-d_0x_1^{-1}+ \epsilon(x_1x_2)^{-1}| \geq |x_1x_2|,
$$
as the second factor is a polynomial.
On the other hand, the hypothesis $\nu(Ncx_1x_2)\leq 0$ implies
$|x_1x_2| = |Ncx_1 x_2||Nc|^{-1}\geq|\pi^r|$.
Thus, $r=\nu(x_1x_2)$ and $\sigma=a_0x_2^{-1}-d_0x_1^{-1}+ b(x_1x_2)^{-1} + Nc$ is a non zero constant polynomial.
 But $\sigma$ is divisible by $\text{gcd}(x_1^{-1},x_2^{-1})$, and therefore $\text{gcd}(x_1^{-1},x_2^{-1})=1$.
 If $\epsilon \neq 0$ we conclude that $b(x_1x_2)^{-1} + Nc$ is a multiple of $(x_1x_2)^{-1}$. 
By the strong triangular inequality, $|\sigma|=1$ impplies
$$|a_0x_1^{-1}-d_0x_2^{-1}| = |b(x_1x_2)^{-1} + Nc|\geq |x_1x_2|^{-1}.$$
The preceeding inequality is impossible by a degree argument.
To finish the proof we consider $\epsilon=0$, in which case $|a_0x_1^{-1}-d_0x_2^{-1}|=1$ by (5).
 As the polynomials are monic, this is only possible
 when $a_0=d_0$ and $|x_1-x_2|\leq|\pi^r|$. We conclude that $B_1=B_2$. 
\end{proof}

\section{Acknowledgements}
The first author was suported by Fondecyt-Conicyt, Grant No 1180471. The second author was supported by Conycyt,
Doctoral fellowship No $21180544$.

$$ $$

{\sc Luis Arenas-Carmona}\\
  Universidad de Chile, Facultad de Ciencias, \\Casilla 653, Santiago, Chile\\
  \email{learenas@u.uchile.cl}

$$ $$

{\sc Claudio Bravo}\\
Departamento de Matem\'atica,\\
Universidad T\'ecnica Federico Santa Mar\'ia, \\
Avda. Espa\~na 1680, Valpara\'iso, Chile\\
  \email{claudio.bravoc@usm.cl}

\end{document}